\providecommand{\tabularnewline}{\\}
\providecommand{\algorithmname}{Algorithm}
\theoremstyle{plain}
\newtheorem{lemma}{Lemma}[section]
\newtheorem{proposition}{Proposition}[section]
\theoremstyle{definition}
\numberwithin{equation}{section}
\newtheorem{definition}{Definition}[section]
\newtheorem{problem}{Problem}[section]
\newenvironment{example}
  {\pushQED{\qed}\examplex}
  {\popQED\endexamplex}
\global\long\def\pro#1{\text{proj}^{#1}}
\global\long\def\B#1{\{0,1\}^{#1}}
\global\long\def\BoolFunc#1{\mathbb{B}(#1)}
\global\long\def\vI{\mathcal{I}}
\global\long\def\vP{\mathcal{\mathcal{V}}}
\global\long\def\Forward{\mathcal{\text{FORWARD}}}
\global\long\def\Forwardeq{\mathcal{\text{FORWARDEQ}}}
\global\long\def\Backward{\mathcal{\text{BACKWARD}}}
\global\long\def\Smaller{\mathcal{\text{SMALLER}}}
\global\long\def\Smallereq{\mathcal{\text{SMALLEREQ}}}
\def\F{\mathbb{F}_{2}}
\def\Fx{\F[x_{1},\dots,x_{n}]}
\def\gbs{Gr\"obner bases\xspace}
\def\gb{Gr\"obner basis\xspace}
\def\wrt{with respect to\xspace}
\def\Zzn{\mathbb{Z}_{\geq0}^{n}}
\def\Var{\mathrm{Var}}
\def\Comp{\mathrm{Comp}}
\newcommand{\NF}[3]{\mathrm{NF}_{#2}(#3, #1)}
\newcommand{\NFI}[2]{\mathrm{NF}(#2, #1)}
\date{}
\title{Classifier construction in Boolean networks using algebraic
  methods\thanks{Supported by the DFG-funded Cluster of Excellence
    MATH+: Berlin Mathematics Research Center, Project
    AA1-4. Mat\'ias R. Bender was supported by the ERC under
    the European’s Horizon 2020 research and innovation programme
    (grant agreement No 787840).}}
\author[1]{Robert Schwieger}
\author[2]{Mat\'ias R. Bender}
\author[1]{Heike Siebert}
\author[1]{Christian Haase}
\affil[1]{Freie Universit\"at Berlin, Germany}
\affil[2]{Technische Universit\"at Berlin, Germany}
\begin{document}

\maketitle              
\vspace{-2\baselineskip}

\begin{abstract}
We investigate how classifiers for Boolean networks (BNs) can be constructed and modified under constraints. A typical constraint is to observe only states in attractors or even more specifically steady states of BNs. Steady states of BNs are one of the most interesting features for application. Large models can possess many steady states. In the typical scenario motivating this paper we start from a Boolean model with a given classification of the state space into phenotypes defined by high-level readout components. In order to link molecular biomarkers with experimental design, we search for alternative components suitable for the given classification task. This is useful for modelers of regulatory networks for suggesting experiments and measurements based on their models. It can also help to explain causal relations between components and phenotypes. To tackle this problem we need to use the structure of the BN and the constraints. This calls for an algebraic approach. Indeed we demonstrate that this problem can be reformulated into the language of algebraic geometry. While already interesting in itself, this allows us to use \gbs to construct an algorithm for finding such classifiers. We demonstrate the usefulness of this algorithm as a proof of concept on a model with 25 components.

\end{abstract}

{\small
  \textbf{Keywords:} Boolean networks, Algebraic geometry, Gr\"obner bases, Classifiers.
  }
\section{\label{sec:Motivation}Motivation}
For the analysis of large regulatory networks so called \emph{Boolean
networks (BNs)} are used among other modeling frameworks  \cite{samaga2013modeling,albert2014boolean,le2015quantitative}. They have been applied frequently in the past \cite{gonzalez2008logical,sanchez2002segmenting,faure2014discrete,bonzanni2013hard}. In this approach interactions between different components of the
regulatory networks are modeled by logical expressions. Formally,
a Boolean network is simply a Boolean function $f:\B n\rightarrow\B n$, $n\in\mathbb{N}$. This Boolean function contains the information
about the interactions of the components in the network. It is then
translated into a so called \emph{state transition graph (STG)}. There
are several slightly different formalisms for the construction of
the STG of a BN. In all cases, the resulting state transition graph
is a directed graph over the set of vertices $\B n$. The vertices
of the STG are also called \emph{states} in the literature about Boolean
networks. 

Modelers of regulatory networks are frequently \textendash{} if not
to say almost always \textendash{} confronted with uncertainties about
the exact nature of the interactions among the components of the network.
Consequently, in many modeling approaches models may exhibit alternative
behaviors. In so called \emph{asynchronous} Boolean networks for example
each state in the state transition graph can have many potential successor
states (see e.g. \cite{chaouiya2003qualitative}). More fundamentally,
alternative models are constructed and then compared with each other
(see e.g. \cite{samaga2009logic,thobe2017unraveling}).

To validate or to refine such models we need to measure the real world
system and compare the results with the model(s). However, in reality
for networks with many components it is not realistic to be able to
measure all the components. In this scenario there is an additional
step in the above procedure in which the modeler first needs to select
a set of components to be measured which are relevant for the posed
question. This scenario motivates our problem here. How can a modeler
decide which components should be measured? It is clear that the answer
depends on the question posed to the model and on the prior knowledge
or assumptions assumed to be true.

When formalizing this question we are confronted with the task to
find different representations of partially defined Boolean functions.
In the field of \emph{logical analysis of data (LAD)} a very similar
problem is tackled \cite{boros2000implementation,alexe2003coronary,hammer2006logical,chikalov2013logical}.
Here a list of binary vectorized samples needs to be extended to a
Boolean function \textendash{} a so called \emph{theory} (see e.g.
\cite[p. 160]{chikalov2013logical}). In the literature of LAD this
problem is also referred to as the Extension-Problem \cite[p. 161 and p. 170]{chikalov2013logical}.
Here reformulations into linear integer programs are used frequently
\cite{chikalov2013logical}. However, they are more tailored to the
case where the partially defined Boolean functions are defined explicitly
by truth tables. In contrast to the scenario in LAD in our case the
sets are typically assumed to be given implicitly (e.g. by so called
\emph{readout components}).

A common assumption in the field of Boolean modeling is that \emph{attractors}
play an important role. Attractors of BNs are thought
to capture the long term behavior of the modeled regulatory network.
Of special interest among these attractors are \emph{steady states}
(defined by $f(x)=x$ for a BN $f$). Consequently, a typical scenario
is that the modeler assumes to observe only states of the modeled
network which correspond to states belonging to attractors or even
only steady states of the STG. The state space is then often partitioned
by so-called \emph{readout components} into \emph{phenotypes}. 

Our first contribution will be a reformulation of the above problem
into the language of algebraic geometry. For this purpose we focus
on the case of classification into two phenotypes. This is an important
special case. Solutions to the more general case can be obtained by
performing the algorithm iteratively. The two sets of states $A_{1}$
and $A_{2}$ in $\B n$ describing the phenotypes will be defined
by some polynomial equations in the components of the network. This
algebraic reformulation is possible since we can express the Boolean
function $f:\B n\rightarrow\B n$ with polynomials over $\Fx$
\textendash{} the polynomial ring over the finite field of cardinality
two (see \secref{Algebraic-background}). In this way we relate the
problem to a large well-developed theoretical framework. Algebraic
approaches for the construction and analysis of BNs and chemical reaction systems have
been used in the past already successfully (see e.g. \cite{jarrah2007discrete_toric_varieties,laubenbacher_poly_alg_discr_models,millan2012chemical}).
Among other applications they have been applied to the control of BNs  \cite{laubenbacher_identification_control_targets}
and to the inference of BNs from data \cite{veliz2012algebraic,vera2014algebra}.

Our second contribution will be to use this algebraic machinery to
construct a new algorithm to find alternative classifiers. To our
knowledge this is the first algorithm that is able to make use of the
implicit description of the sets that should be classified. For this
algorithm we use \gbs. \gbs are one of the most
important tools in computational algebraic geometry and they have been
applied in innumerous applications, e.g. cryptography
\cite{faugere_algebraic_2003}, statistics \cite{drton_lectures_2009},
robotics \cite{buchberger_applications_1988}, biological dynamical
systems \cite{dickenstein_multistationarity_2019,laubenbacher_reverse_engeneering,laubenbacher_identification_control_targets,laubenbacher_steady_states}.
Specialized algorithms for the computations of \gbs have
been developed for the Boolean case and can be freely accessed
\cite{brickenstein2009polybori}.  They are able to deal with with
systems of Boolean polynomials with up to several hundreds variables
\cite{brickenstein2009polybori} using a specialized data structure (so
called zero-suppressed binary decision diagram (ZDD)
\cite{mishchenko2001introduction}). Such approaches are in many
instances competitive with conventional solvers for the Boolean
satisfiability problem (SAT-solvers) \cite{brickenstein2009polybori}.

Our paper is structured in the following way. We start by giving the
mathematical background used in the subsequent sections in \secref{Algebraic-background}.
In \secref{Algebraic-formalization} we formalize our problem. We
then continue in \secref{Description-of-the-algorithm} to give a
high-level description of the algorithm we developed for this problem.
More details about the used data structures and performance can be
found in \secref{Implementation-and-benchmarking}. As a proof of
concept we investigate in \secref{Biological-example} a BN
of $25$ components modeling cell-fate decision \cite{cell-fate-model}.
We conclude the paper with discussing potential ways to improve the
algorithm.

\section{\label{sec:Algebraic-background}Mathematical background}

In the course of this paper we need some concepts and notation used
in computational algebraic geometry. For our purposes, we will give
all definitions for the field of cardinaliy two denoted by $\mathbb{F}_{2}$
even though they apply to a much more general setting. For a more
extensive and general introduction to algebraic geometry and \gbs we refer to \cite{cox2007ideals}. 

We denote the ring of polynomials in $x_{1},\dots,x_{n}$ over
$\mathbb{F}_{2}$ with $\Fx$. For $n \in \mathbb{N}$, let
$[n] := \{1,\dots,n\}$. Given
$\alpha = (\alpha_1,\dots,\alpha_n) \in \Zzn$, we denote by $x^\alpha$
the monomial $\prod_i x_i^{\alpha_i}$ in $\Fx$. For $f_1,\dots , f_k$
in $\Fx$ we denote with $\langle f_1,\dots , f_k \rangle$ a so-called
\emph{ideal} in $\Fx$ \textendash{} a subset of polynomials which is
closed under addition and multiplication with elements in $\Fx$
\textendash{} generated by these polynomials.
The set of Boolean functions \textendash{} that is the set of
functions from $\mathbb{F}_{2}^{n}$ to $\mathbb{F}_{2}$ \textendash{}
will be denoted by $\BoolFunc n$.  When speaking about Boolean
functions and polynomials in $\Fx$ we need to take into account that
the set of polynomials $\Fx$ does not coincide with the set of Boolean
functions. This is the case since the so-called \emph{field
  polynomials} $x_{1}^{2}-x_{1}$, $\dots$, $x_{n}^{2}-x_{n}$ evaluate
to zero over $\mathbb{F}_{2}^{n}$ \cite{germundsson1991basic}.
Consequently, there is not a one-to-one correspondence between
polynomials and Boolean functions. However, we can say that any two
polynomials whose difference is a sum of field polynomials corresponds
to the same Boolean function (see
e.g. \cite{cheng2009controllability}).  In other words we can identify
the ring of Boolean functions $\BoolFunc n$ with
the quotient ring $\Fx/\langle x_{1}^{2}-x_{1},\dots,x_{n}^{2}-x_{n}\rangle$.  We will
denote both objects with $\BoolFunc n$. A canonical system of
representatives of $\BoolFunc n$ is linearly spanned by the the
square-free monomials in $\Fx$.
Hence, in what follows when we talk about a Boolean function
$f \in \BoolFunc n$ as a polynomial in the variables $x_1,\dots,x_n$
we refer to the unique polynomial in $\Fx$ which involves only
monomials that are square-free and agrees with $f$ as a Boolean
function.

Since we are interested in our application in subsets of $\mathbb{F}_{2}^{n}$,
we need to explain their relationship to the polynomial ring $\Fx$.
This relationship is established using the notion of the vanishing
ideal. Instead of considering a set $B\subseteq\mathbb{F}_{2}^{n}$
we will look at its \emph{vanishing ideal} $\vI(B)$ in $\BoolFunc n$.
The vanishing ideal of $B$ consists of all Boolean functions which
evaluate to zero on $B$. Conversely, for an ideal $\vI$ in $\BoolFunc n$
we denote with $\vP(\vI)$ the set of points in $\mathbb{F}_{2}^{n}$
for which every Boolean function in $\vI$ evaluates to zero. Due
to the Boolean Nullstellensatz (see \cite{sato2011boolean,finite_fields_qunatifier_elimination})
there is an easy relation between a set $B\subseteq\mathbb{F}_{2}^{n}$
and its vanishing ideal $\vI(B)$: For an ideal $\vI$ in $\BoolFunc n$
such that $\vP(\vI)\not=\emptyset$ and for any polynomial $h \in\BoolFunc n$
it holds
\[
h \in\vI\Leftrightarrow\forall v\in\vP(\vI):h(v)=0.
\]

In this paper, we will consider Boolean functions whose domain is restricticted to certain states (e.g. attractors or steady states). Hence, there are different Boolean functions that behave in the same way when we restrict their domain.
\begin{example}
  Consider the set $B :=
  \{000, 110, 101, 011\}$. Consider the Boolean function
  $f := x_1$ and $g := x_2 + x_3$. Both Boolean functions are
  different, i.e., $f(1,1,1) = 1$ and $g(1,1,1) = 0$, but they agree
  over $B$.
  $$
  \begin{array}{c |  c | c  | c | c}
    & 000 & 110 & 101 & 011 \\ \hline
    f & 0 & 1 & 1 & 0 \\
    g & 0 & 1 & 1 & 0 
  \end{array}.
  $$
  Note that $\vI(B) = \langle x_1 + x_2 + x_3 \rangle$, that is, the
  ideal $\vI(B)$ is generated by the Boolean function
  $x_1 + x_2 + x_3$ since it is the unique Boolean function vanishing only on $B$.
\end{example}
Given a set $B$, we write $\BoolFunc n/\vI(B)$ to refer to the set of
all the different Boolean functions on $B$. As we saw in the previous
example, different Boolean functions agree on $B$. Hence, we will be
interested in how to obtain certain representatives of the Boolean
function in $\BoolFunc n/\vI(B)$ algorithmically. In our application,
the set $\BoolFunc n/\vI(B)$ will become the set of all possible
classifiers we can construct that differ on $B$.
To obtain specific representatives of a Boolean function in $\BoolFunc
n/\vI(B)$ we will use \gbs.
A \gb of an ideal is a set of generators of the ideal with some extra
properties related to \emph{monomial orderings}.  A monomial ordering is a total ordering on the set of monomials in $\Fx$ satisfying some additional properties to ensure the compatibility with the algebraic operations in $\Fx$ (see \cite[p. 69]{cox2007ideals} for details). 
 
For any polynomial in $p\in\Fx$ and monomial ordering $\prec$, we denote the \emph{initial monomial} of $p$ by $in_\prec(p)$, that is the largest monomial appearing
in $p$ with respect to $\prec$.
We are interested in specific orderings \textendash{} the
lexicographical orderings \textendash{} on these monomials. As we will see, the usage of lexicographical orderings in the context of our application will allow us to look for classifiers which are optimal in a certain sense.
\begin{definition}[{\cite[p. 70]{cox2007ideals}}]
  \label{def:lexicographical-orderings}
  Let
  $\alpha=\begin{pmatrix}\alpha_{1} & \dots & \alpha_{n}\end{pmatrix}$
  and
  $\beta=\begin{pmatrix}\beta_{1} & \dots & \beta_{n}\end{pmatrix}$ be
  two elements in $\Zzn$.
  Given a permutation $\sigma$ of $\{1,\dots,n\}$, we say
  $x^\alpha \succ_{lex(\sigma)} x^\beta$ if there is
  $k \in [n]$ such that
  $$
  (\forall i < k: \, \alpha_{\sigma(i)} = \beta_{\sigma(i)})
  \text{ and }  \alpha_{\sigma(k)} > \beta_{\sigma(k)}.
  $$
\end{definition}

\begin{definition}[{\cite[p. 1]{sturmfels1996grobner}}]
\label{def:initial_ideal}Let $\prec$ be any monomial ordering. For
an ideal $\vI\subseteq\Fx$ we define its \emph{initial
ideal} as the ideal 
\[
in_{\prec}(\vI):=\langle in_{\prec}(f)|f\in\vI\rangle.
\]
A finite subset $G\subseteq\vI$ is a \emph{\gb}
for $\vI$ with respect to $\prec$ if $in_{\prec}(\vI)$ is generated
by $\{in_{\prec}(g)|g\in G\}$. If no element of the \gb
$G$ is redundant, then $G$ is \emph{minimal}.
It is called \emph{reduced} if for any
two distinct elements $g,g'\in G$ no monomial in $g'$ is divisible by
$in_{\prec}(g)$.
Given an ideal and a monomial ordering, there is a unique minimal
reduced \gb involving only monic polynomials; we denote it
by $G_{\prec}(\vI)$. Every monomial not lying in $in_{\prec}(\vI)$ is
called \emph{standard monomial}.
\end{definition}

We can extend the monomial orderings to partial orderings of
polynomials on $\Fx$.
Consider polynomials $f, g \in \Fx$ and a monomial ordering
$\succ$. We say that $f \succ g$ if $in_\prec(f) \succ in_\prec(g)$ or
$f - in_\prec(f) \succ g - in_\prec(g)$.
The division algorithm rewrites every polynomial $f \in \Fx$ modulo
$\vI$ uniquely as a linear combination of these standard
monomials \cite[Ch. 2]{cox2007ideals}. The result of this algorithm is
called Normal form.
For the convenience of the reader, we state this in the
following lemma and definition.

\begin{lemma}[Normal form]
  Given a monomial ordering $\succ$, $f \in \BoolFunc n$ and an
  ideal $\vI$ there is a unique $g \in \BoolFunc n$ such that $f$ and
  $g$ represent the same Boolean function in $\BoolFunc n / \vI$ and
  $g$ is minimal with respect to the ordering $\succ$ among all the
  Boolean functions equivalent to $f$ in $\BoolFunc n / \vI$. We call $g$ the normal form of $f$ modulo $I$ denoted by
  $\NF{f}{\vI}{\succ}$.
\end{lemma}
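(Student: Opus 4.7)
The plan is to produce $g$ explicitly via the division algorithm against a Gr\"obner basis and then argue that the resulting polynomial is the unique minimum of its equivalence class in $\BoolFunc n/\vI$. We work inside $\BoolFunc n = \Fx/\langle x_1^2-x_1,\dots,x_n^2-x_n\rangle$ and treat $\vI$ as an ideal in $\Fx$ containing the field polynomials, so that reduction modulo $\vI$ already accounts for the Boolean identification.

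For existence, I would invoke Definition \ref{def:initial_ideal} to obtain the reduced Gr\"obner basis $G_\succ(\vI)$ of $\vI$ with respect to $\succ$ and then apply the multivariate division algorithm \cite[Ch.~2]{cox2007ideals} to divide $f$ by $G_\succ(\vI)$. The output $g$ satisfies $f-g \in \vI$, so $f$ and $g$ represent the same class modulo $\vI$, and every monomial appearing in $g$ is a standard monomial, i.e., not divisible by $in_\succ(h)$ for any $h\in G_\succ(\vI)$. Hence $g \in \BoolFunc n$ is a representative of the class of $f$.

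The heart of the proof, and the step I expect to require the most care, is showing that this $g$ is strictly smaller than every other representative with respect to $\succ$. Suppose $g'$ is another representative of the class of $f$, so $g-g' \in \vI$. If $g\neq g'$, set $m := in_\succ(g-g')$. Then $m \in in_\succ(\vI)$, so $m$ is not a standard monomial and therefore cannot appear in $g$; working over $\F$, it must then appear in $g'$. Moreover, all monomials strictly larger than $m$ cancel in $g-g'$ and hence have equal coefficients in $g$ and $g'$. By the recursive polynomial ordering defined just above the lemma, this configuration forces $g' \succ g$, yielding both minimality and uniqueness of $g$ at once. The only subtlety to confirm is that this recursive polynomial comparison terminates and gives a total order on nonzero elements of $\BoolFunc n$, which follows because each recursive step strictly decreases the leading monomial and there are only finitely many square-free monomials to consider.
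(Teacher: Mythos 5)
Your proposal is correct and follows essentially the route the paper itself relies on: the paper states this lemma without proof, citing only the division algorithm of \cite[Ch.~2]{cox2007ideals}, which is exactly your existence step (reduce $f$ against $G_\succ(\vI)$ to get a remainder supported on standard monomials). Your minimality argument --- taking $m = in_\succ(g-g')$, noting $m$ is non-standard hence absent from $g$ but present in $g'$ over $\F$, and concluding $g' \succ g$ under the recursively extended ordering --- is sound and in fact supplies the one detail the paper leaves entirely implicit, namely that the standard-monomial remainder is not just the unique such representative but the strict minimum of its residue class with respect to the polynomial ordering defined just before the lemma.
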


\begin{example}[Cont.]
  Consider the permutation $\sigma$ of $\{1,2,3\}$ such that
  $\sigma(i) := 4 - i$. Then,
  $\NF{f}{\vI}{\succ_{\sigma}} = \NF{g}{\vI}{\succ_{\sigma}} =
  x_1$. Hence, if we choose a ``good'' monomial ordering, we can
  get simpler Boolean functions involving less variables.
\end{example}

\section{Algebraic formalization}
\label{sec:Algebraic-formalization}

As discussed in \secref{Motivation}, we start with the assumption
that we are given a set of Boolean vectors $B$ in $\mathbb{F}_{2}^{n}$
representing the observable states. In our applications these states
are typically attractors or steady states of a BN. We also assume that
our set $B$ is partitioned into a set of phenotypes, i.e. $B=A_{1}\cup\cdots\cup A_{k}$.
Our goal is then to find the components that allow us to decide for
a vector in $B$ to which set $A_{i}$, $i\in[k]$, it belongs.
For a set of indices $I\subseteq[n]$ and $x \in \mathbb{F}_{2}^{n}$, let us denote with $\pro I(x)$ the projection of $x$ onto the components $I$.  Our problem could be formalized in the following way.

\begin{problem}[State-Discrimination-Problem]
\label{prob:problem-statement}For a given partition of non-empty
sets $A_{1},\dots,A_{k}$ ($k\geq2$) of states $B\subseteq\B n$, find
the sets of components $\emptyset\not=I\subseteq[n]$ such that $\pro I(A_{1}),\dots,\pro I(A_{k})$
forms a partition of $\pro I(B)$. 
\end{problem}

Clearly, since the sets $A_{1},\dots,A_{k}$ form a partition of $B$, we can decide for each state $x$ in $B$ to which set $A_i$ it belongs. If $I\subseteq[n]$ is a solution to \probref{problem-statement}, this decision can be only based on $\pro I(x)$ as $\pro I(A_{1}),\dots,\pro I(A_{k})$ form a partition of $\pro I(B)$. As we discussed in \secref{Motivation}, \probref{problem-statement}
is equivalent to Extension-Problem \cite[p. 161 and p. 170]{chikalov2013logical}.
However, in our case the sets in \probref{problem-statement} are
typically given implicitly. That is, we are given already some information
about the structure of the sets in the above problem. This calls for
an algebraic approach. We consider the case in \probref{problem-statement}
where $k$ equals two. This is an important special case since many
classification problems consist of two sets (e.g. healthy and sick).
Furthermore, solutions to the more general case can be obtained by
considering iteratively the binary case (see also the case study in
\secref{Biological-example}).

Let $\vI(B)\subseteq\Fx$ be the vanishing
ideal of a set $B\subseteq\mathbb{F}_{2}^{n}$. Let $f:\mathbb{F}_{2}^{n}\rightarrow\mathbb{F}_{2}$
be a Boolean function which can be identified with an element in $\BoolFunc n:=\Fx\big/\langle x_{1}^{2}-x_{1},\dots,x_{n}^{2}-x_{n}\rangle$.
We want to find representatives of $f$ in $\BoolFunc n\big/\vI(B)$
which depend on a minimal set of variables with respect to set inclusion
or cardinality. We express this in the following form:

\begin{problem}
  \label{prob:version1}
  For $f \in \BoolFunc n$ and
  $B\subseteq\mathbb{F}_{2}^{n}$, find the representatives of $f$ in
  $\BoolFunc n\big/\vI(B)$ which depend on a set of variables
  satisfying some minimality criterion.
\end{problem}

It is clear that \probref{version1} is equivalent to
\probref{problem-statement} for the case $k=2$ since due to the
Boolean Strong Nullstellensatz (see \cite{sato2011boolean}) a Boolean
function $f$ is zero on $B$ if and only if $f$ is in
$\vI(B)$. Therefore, all Boolean functions which are in the same
residue class as $f$ agree with it as Boolean functions on
$B$ and vice versa. The sets of variables each representative depends
on are the solutions to \probref{version1}. The representatives are
then the classifiers.

Here we will focus on solutions of \probref{version1} which are minimal
with respect to set inclusion or cardinality. However, also other
optimality criteria are imaginable. For example one could introduce
some weights for the components. Let us illustrate \probref{version1}
with a small example.

\begin{example}
Consider the set $B=\{000,111,011,101\}\subset\mathbb{F}_{2}^{3}$.
Then $\vI(B)\subseteq\BoolFunc 3$ is given by $\langle\overbrace{x_{1}x_{2}+x_{1}+x_{2}+x_{3}}^{=:f}\rangle$ since
$f$ is the unique Boolean function that is zero on $B$ and one on it complement.
Let $\varphi(x)=x_{1}x_{2}x_{3}$. It is easy to check that for example
$x_{1}x_{3}+x_{2}x_{3}+x_{3}$ and $x_{1}x_{2}$ are different representatives
of $\varphi$ in $\BoolFunc 3\big/\vI(B)$. The representative $x_{1}x_{2}$
depends only on two variables while the other two representatives
depend on three. 
\end{example}

We can obtain a minimal representative of $f$ in \probref{version1} by
computing $\NF{f}{\vI}{\prec}$ for a suitable lexicographical ordering
$\prec$.

\begin{proposition}
  \label{prop:lex-orderings-are-useful}
  Given a set of points $B \subset \F^n$ and a Boolean function
  $f \in \BoolFunc n$ assume, with no loss of generality, that there
  is an equivalent Boolean function $g \in \Fx$ modulo $\vI(B)$
  involving only $x_k,\dots,x_n$.  Consider a permutation $\sigma$ of
  $\{1,\dots,n\}$ such that $\sigma(\{k,\dots,n\}) =
  \{k,\dots,n\}$. Then, the only variables appearing in
  $\NF{f}{I}{\prec_{lex(\sigma)}} = \NF{g}{I}{\prec_{lex(\sigma)}}$
  are the ones in $\{x_k,\dots,x_n\}$. In particular, if there is no
  Boolean function equivalent to $f$ modulo $\vI(B)$ involving a
  proper subset of $\{x_k,\dots,x_n\}$, then
  $\NF{f}{I}{\prec_{lex(\sigma)}} = \NF{g}{I}{\prec_{lex(\sigma)}}$
  involves all the variables in $\{x_k,\dots,x_n\}$.
\end{proposition}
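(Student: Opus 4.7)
The plan is to first reduce to the normal form of $g$ by uniqueness, then exploit the structure of $\sigma$ to show that any equivalent polynomial using a forbidden variable is strictly larger than $g$, so the (minimal) normal form cannot use such variables.

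First I would note that the equality $\NF{f}{\vI(B)}{\prec_{lex(\sigma)}} = \NF{g}{\vI(B)}{\prec_{lex(\sigma)}}$ is immediate from the Normal Form Lemma, since $f$ and $g$ represent the same class in $\BoolFunc n / \vI(B)$ and the normal form is unique for that class. So it suffices to reason about $g$.

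Next I would establish the key structural property of $\prec_{lex(\sigma)}$ under the hypothesis $\sigma(\{k,\dots,n\}) = \{k,\dots,n\}$ (equivalently $\sigma(\{1,\dots,k-1\}) = \{1,\dots,k-1\}$): every monomial $x^\alpha$ involving at least one variable from $\{x_1,\dots,x_{k-1}\}$ is strictly greater (w.r.t.\ $\prec_{lex(\sigma)}$) than every monomial $x^\beta$ that involves only variables from $\{x_k,\dots,x_n\}$. The reason is that the first $k-1$ comparison positions $\sigma(1),\dots,\sigma(k-1)$ are a permutation of $\{1,\dots,k-1\}$, where $\beta$ is identically zero while $\alpha$ has at least one positive entry; walking through these positions we find the first index $\ell \le k-1$ with $\alpha_{\sigma(\ell)} > 0 = \beta_{\sigma(\ell)}$, which is exactly the condition for $x^\alpha \succ_{lex(\sigma)} x^\beta$.

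With this in hand, set $h := \NF{g}{\vI(B)}{\prec_{lex(\sigma)}}$ and argue by contradiction: suppose $h$ involved some variable $x_j$ with $j < k$. Then $h$ contains a monomial from the forbidden block, and by the previous step this monomial is strictly greater than every monomial of $g$ (which only uses $\{x_k,\dots,x_n\}$). In particular $in_{\prec_{lex(\sigma)}}(h) \succ in_{\prec_{lex(\sigma)}}(g)$, which by the extension of the monomial ordering to polynomials yields $h \succ g$. But $g$ is itself equivalent to $h$ modulo $\vI(B)$, contradicting the minimality characterization of the normal form. Hence $h$ uses only variables in $\{x_k,\dots,x_n\}$, proving the first claim. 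The ``in particular'' part is then immediate: $h$ is a Boolean function equivalent to $f$ that uses only variables in $\{x_k,\dots,x_n\}$, and by the stated hypothesis it cannot use a proper subset of them, so it must use all of them.

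The main obstacle is really just the bookkeeping in step two, namely verifying that the permutation condition on $\sigma$ gives precisely the block-ordering property needed; once that is in place, the minimality of the normal form does all the work. No new computations with Gr\"obner bases are required beyond the fact that $\NF{\cdot}{\vI(B)}{\prec_{lex(\sigma)}}$ returns the minimum representative of its equivalence class.
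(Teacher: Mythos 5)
Your proof is correct and follows essentially the same route as the paper's: the paper also argues from the minimality of the normal form with respect to $\prec_{lex(\sigma)}$, observing that any equivalent Boolean function involving a variable from $\{x_1,\dots,x_{k-1}\}$ is strictly larger than $g$ and hence cannot be the normal form. You simply make explicit the block-ordering property of $\prec_{lex(\sigma)}$ and the reduction to $g$ via uniqueness, which the paper leaves implicit.
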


\begin{proof}
  The proof follows from the minimality of
  $\NF{f}{I}{\prec_{lex(\sigma)}}$ \wrt $\prec_{lex(\sigma)}$. Note
  that, because of the lexicographical ordering $\prec_{lex(\sigma)}$, any
  Boolean function equivalent to $f$ modulo $\vI(B)$ involving 
  variables in $\{x_1,\dots,x_{k-1}\}$ will be bigger than $g$, so it
  cannot be minimal.
\end{proof}

\section{\label{sec:Description-of-the-algorithm}Description of the Algorithm}

Clearly we could use \propref{lex-orderings-are-useful} to obtain
an algorithm that finds the minimal representatives of $\varphi$
in $\BoolFunc n/\vI(B)$ by iterating over all lexicographical orderings
in $\Fx$. However, this naive approach
has several drawbacks:

\begin{enumerate}
\item The number of orderings over $\BoolFunc n$ is growing rapidly with
$n$ since there are $n!$ many lexicographical orderings over $\BoolFunc n$
to check.
\item We do not obtain for every lexicographical ordering a minimal representative.
Excluding some of these orderings ``simultaneously'' could be very
beneficial.
\item Different monomial orderings can induce the same \gbs. Consequently, the normal form leads to the same
  representative.
\item Normal forms with different monomial orderings can result in the
  same representative. If we detect such cases we avoid unnecessary
  computations.
\end{enumerate}

We describe now an algorithm addressing the first two points. Recall
that for a monomial ordering $\succ$ and 
$f \in \BoolFunc n$, we use the notation $\NF{f}{\vI}{\succ}$ to
denote the normal form of $f$ in $\BoolFunc n / \vI$ \wrt
$\succ$. When $\vI$ is clear from the context, we write
$\NFI{f}{\succ}$.  We denote with $\varphi$ any
representative of the indicator function of $A$ in $\BoolFunc n\big/\vI(B)$. Let 
$\Var(\varphi)$ be the variables occurring in $\varphi$ and 
$\Comp(\varphi)$ be its complement in $\{x_{1},\dots,x_{n}\}$. 
Instead of iterating through the orderings on $\BoolFunc n$, we
consider candidate sets in the power set of $\{x_{1},\dots,x_{n}\}$,
denoted by $\mathscr{P}(x_{1},\dots,x_{n})$ (i.e. we initialize the
family of candidate sets $P$ with
$P\leftarrow\mathscr{P}(x_{1},\dots,x_{n})$).  We want to find the
sets $A$ in the family of candidate sets $P$ for which the equality
$A=\Var(\varphi)$ holds for some minimal solution $\varphi$ to
\probref{version1}, that is, involving the minimal amount of
variables. For each candidate set $A$ involving $k$ variables 
we pick a lexicographical ordering $\succ$ for which it holds
$A^{c}\succ A$, i.e. for every variable $x_i \in A^{c}$ and
$x_j \in A$ it holds $x_i \succ x_j$, where
$A^c := \{x_1,\dots,x_n\} \setminus A$.  This approach is sufficient
to find the minimal solutions as we will argue below. This addresses
the first point above since there are $2^{n}$ candidate sets to
consider while there are $n!$ many orderings.\footnote{Note that
  $\lim_{n \rightarrow \infty} \frac{n!}{2^n} = \infty$, so it is more
  efficient to iterate through $2^{n}$ candidate sets than through
  $n!$ orderings.}

\subsection{Excluding candidate sets}

To address the second point we will exclude after each reduction step
a family of candidate sets. 
If, for an ordering $\succ$, we computed a representative $\NFI{\varphi}{\succ}$
we can, independently of the minimality of $\NFI{\varphi}{\succ}$, exclude
some sets in $P$. To do so, we define for any set $A\subseteq\{x_{1},\dots,x_{n}\}$
 the following families of sets:

\begin{align*}
\Forward(A) & :=\big\{ B\subseteq\{x_{1},\dots,x_{n}\}|A\subset B\big\},\\
\text{\ensuremath{\Forwardeq}}(A) & :=\big\{ B\subseteq\{x_{1},\dots,x_{n}\}|A\subseteq B\big\},\\
\text{\ensuremath{\Backward}}(A) & :=\big\{ B\subseteq\{x_{1},\dots,x_{n}\}|B\subseteq A\big\},\\
\Smaller(A,\succ) & :=\big\{ x\in\{x_{1},\dots,x_{n}\}|\exists y\in A:y\succ x\big\},\\
\Smallereq(A,\succ) & :=\big\{ x\in\{x_{1},\dots,x_{n}\}|\exists y\in A:y\succeq x\big\}.
\end{align*}

It is clear that, if we obtain in a reduction step a representative
$\phi=\NFI{\varphi}{\succ}$, we can exclude the sets in
$\Forward(\Var(\phi))$ from the candidate sets $P$. But, as we see in the following lemma, we can exclude even
more candidate sets.

\begin{lemma}
\label{lem:A-is-not-solution-2}Let $\succ$ be a lexicographical
ordering and let $\phi=\NFI{\varphi}{\succ}$ be the corresponding normal form of $\varphi$. Then none of the sets $A\subseteq\Smaller(\Var(\phi),\succ)$
can belong to a minimal solution to \probref{version1}. 
\end{lemma}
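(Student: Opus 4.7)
The plan is to argue by contradiction using the minimality property of the normal form guaranteed by the Normal Form Lemma. Suppose some $A\subseteq\Smaller(\Var(\phi),\succ)$ equals $\Var(\psi)$ for a minimal representative $\psi$ of $\varphi$ in $\BoolFunc n/\vI(B)$. The case $\Var(\phi)=\emptyset$ is trivial (then $\phi$, hence $\varphi$, is constant and there is nothing to show), so we may assume $\Var(\phi)\neq\emptyset$ and let $y$ denote its $\succ$-largest element. By the definition of $\Smaller$, every $x\in A$ satisfies $z\succ x$ for some $z\in\Var(\phi)$; since $y\succeq z$ this yields $y\succ x$. Consequently $\Var(\psi)=A\subseteq\{x_i:x_i\prec y\}$.

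The core of the argument is a lex-ordering comparison showing $in_\succ(\phi)\succeq y\succ in_\succ(\psi)$. Let $k_0$ be the index with $y=x_{\sigma(k_0)}$, so that the exponent vector of the monomial $y$ is zero at positions $1,\dots,k_0-1$ and equals $1$ at position $k_0$. Any monomial $m$ built exclusively from variables $\prec y$, i.e., from $\{x_{\sigma(i)}:i>k_0\}$, has zero exponents at all positions $\leq k_0$; therefore the first $\sigma$-position where $m$ and $y$ differ is $k_0$, where $y$ is larger, so $m\prec y$. Applying this to each monomial of $\psi$ yields $in_\succ(\psi)\prec y$. For $\phi$, which uses only variables $\preceq y$, a symmetric comparison shows that monomials of $\phi$ divisible by $y$ are $\succeq y$, while monomials of $\phi$ not divisible by $y$ are $\prec y$; since $y\in\Var(\phi)$, at least one monomial of $\phi$ is divisible by $y$, so $in_\succ(\phi)\succeq y$.

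Combining these inequalities gives $in_\succ(\phi)\succ in_\succ(\psi)$ and hence $\phi\succ\psi$ under the polynomial extension of $\succ$ defined before the Normal Form Lemma. But $\phi$ and $\psi$ are both representatives of the same class in $\BoolFunc n/\vI(B)$, so the Normal Form Lemma forces $\phi\preceq\psi$, contradicting the previous inequality. Therefore no such $\psi$ and no such $A$ can exist, which is the claim.

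The main obstacle I anticipate is the careful bookkeeping in the middle paragraph: one must unwind the definition of $\prec_{lex(\sigma)}$ to verify that ``$\psi$ uses only variables $\prec y$'' really entails ``$in_\succ(\psi)$ is a monomial strictly smaller than $y$'', which requires the explicit split between monomials divisible by $y$ and those not. Once this lex comparison is in hand, the contradiction with the minimality of the normal form is immediate.
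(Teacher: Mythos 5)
Your proof is correct and follows essentially the same route as the paper's: assume a minimal solution $\psi$ supported on a subset of $\Smaller(\Var(\phi),\succ)$, exhibit a variable $y\in\Var(\phi)$ dominating all of $\Var(\psi)$, and conclude $\psi\prec\phi$, contradicting the minimality of the normal form. The only difference is that you carefully justify the comparison $in_\succ(\phi)\succeq y\succ in_\succ(\psi)$, a step the paper asserts in one line.
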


\begin{proof}
  Assume the contrary, that is there is a minimal solution $\psi$
  with $\Var(\psi) \subseteq\Smaller(\Var(\phi),\succ)$. It follows that, by the definition of lexicographical orderings, there is
  at least one $y\in \Var(\phi)$ with $y\succ \Var(\psi)$. Consequently,
  $\psi$ is smaller than $\phi$ \wrt $\succ$ which cannot happen by the
  definition of the normal form.
\end{proof}

If we also take the structure of the polynomials into account, we can
improve \lemref{A-is-not-solution-2} further. For this purpose, we
look at the initial monomial of $\phi=\NFI{\varphi}{\succ}$ with
respect to $\succ$, $M := in_\succ(\phi)$. We consider the sets $\Var(M)$ and $\Comp(M)$.
Given a variable $x_{i}\in\{x_{1},\dots,x_{n}\}$ and a subset
$S\subseteq\{x_{1},\dots,x_{i}\}$,
let $S_{\succ x_{i}}$ be the set of variables in $S$ bigger than
$x_i$, i.e.
\[
S_{\succ x_{i}} := \big\{ x_j \in S| x_j \succ x_{i}\big\}.
\]

\begin{lemma}
  \label{lem:A-is-not-solution-3}
  Consider a lexicographical ordering $\succ$. Let
  $\phi=\NFI{\varphi}{\succ}$ and $M=in_\succ(\text{\ensuremath{\phi}})$.
  If $x_{i}\in \Var(M)$, then any set
  $S\subseteq\Smallereq(\Var(\phi),\succ)$ with $x_{i}\not\in S$ and
  $S\cap Comp_{\succ x_{i}}(M)=\emptyset$ cannot belong to a minimal
  solution to \probref{version1}.
\end{lemma}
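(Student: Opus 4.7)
The plan is to argue by contradiction: assume some minimal solution $\psi$ to \probref{version1} satisfies $\Var(\psi)=S$. Since $\psi$ and $\varphi$ represent the same class in $\BoolFunc n/\vI(B)$, uniqueness of the normal form gives $\NFI{\psi}{\succ}=\NFI{\varphi}{\succ}=\phi$, so $\phi\preceq\psi$ under the extension of $\succ$ to polynomials. I will derive a contradiction by showing $\phi\succ\psi$; by the first clause of the extension rule, it suffices to prove $M\succ N$, where $N:=in_\succ(\psi)$.

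Since $M$ and $N$ are square-free monomials, comparing them under the lex ordering $\succ$ reduces to locating the largest variable at which they differ. Observe that $\Var(N)\subseteq\Var(\psi)=S$, so the hypotheses on $S$ translate into constraints on $\Var(N)$: the hypothesis $x_i\notin S$ together with $x_i\in\Var(M)$ gives $x_i\in\Var(M)\setminus\Var(N)$, while $S\cap\Comp_{\succ x_i}(M)=\emptyset$ forces $\Var(N)_{\succ x_i}\subseteq\Var(M)_{\succ x_i}$. I then split on whether $M$ and $N$ agree on every variable $\succ x_i$: if so, they first differ at $x_i$, where $M$ contains $x_i$ and $N$ does not; otherwise the largest $y\succ x_i$ at which they differ must lie in $\Var(M)\setminus\Var(N)$. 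Either way, the leading differing position sits in $\Var(M)\setminus\Var(N)$, yielding $M\succ N$.

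The main obstacle I expect is this case split, namely checking that the two conditions on $S$ jointly cover both the ``first difference above $x_i$'' and the ``first difference at $x_i$'' branches. The remaining hypothesis $S\subseteq\Smallereq(\Var(\phi),\succ)$ plays the auxiliary role of keeping $S$ within the support of $\phi$, so that no variable of $S$ can exceed every variable of $M$; this is what keeps the case split exhaustive and prevents $\psi$ from sneaking in a leading variable that $\phi$ does not have. Once $M\succ N$ is in hand, the extension rule yields $\phi\succ\psi$, contradicting $\phi\preceq\psi$ and hence the assumed existence of a minimal solution with $\Var(\psi)=S$.
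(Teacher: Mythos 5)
Your proof is correct and takes essentially the same route as the paper: assume a minimal solution $\psi$ supported on $S$, show that its initial monomial is lexicographically smaller than $M$, and contradict the minimality of the normal form $\phi$ in its residue class. Your case split is in fact more careful than the paper's justification, which asserts that a monomial supported on $S$ involves only variables smaller than $x_i$ --- not literally true when $S$ meets $\Var(M)_{\succ x_i}$, as in the paper's own example with $M=x_4x_2$ and excluded set $\{x_4,x_1\}$ --- whereas your containment $\Var(N)_{\succ x_i}\subseteq\Var(M)_{\succ x_i}$ handles exactly that case.
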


\begin{proof}
  Note that $\prod_{j\in S}x_{j} \prec M$ as $M$ involves $x_i$
  but $\prod_{j\in S}x_{j}$ only involves variables smaller than
  $x_i$. Then, the proof is analogous to \lemref{A-is-not-solution-2}
  using the fact that any minimal solution involving only variables in
  $S$ has monomials smaller or equal than
  $\prod_{j\in S}x_{j} \prec M$. Hence, $\phi$ is not minimal \wrt $\prec$.
\end{proof}
In particular, \lemref{A-is-not-solution-3} entails the case where
the initial monomial of $\NFI{\varphi}{\succ}$ is a product of all variables
occurring in $\NFI{\varphi}{\succ}$. In this case, for every subset $S\subseteq \Var(\phi)\subseteq\Smallereq(\Var(\phi),\succ)$
it holds
$S\cap\Comp(M) = S\cap\Comp(\phi) = \emptyset$.
Therefore,
according to \lemref{A-is-not-solution-3} $\NFI{\varphi}{\succ}$ is minimal. 

For a lexicographical ordering $\succ$ and a normal form $\phi=\NFI{\varphi}{\succ}$
we can, using \lemref{A-is-not-solution-3}, exclude the families
of sets in (\ref{eq:Si}) from the set of candidates $P$. 
\begin{align}
  & \Backward(S_{i})\text{ with }x_{i}\in \Var(in_\succ(\phi))\text{ and}\label{eq:Si}\\
 & S_{i}:=\Smallereq(\Var(\phi),\succ)\backslash\big(\{x_{i}\}\cup Comp_{\succ x_{i}}(in_\succ(\phi))\big)\nonumber 
\end{align}
We illustrate this fact with a small example:
\begin{example}
  Consider a lexicographical ordering $\succ$ with
  $x_{4}\succ\cdots\succ x_{1}$ and a normal form
  $\phi=\NFI{\varphi}{\succ}$ with initial monomial
  $x_{4}x_{2}$. Then, we can exclude from $P$ the sets in
  $\Backward(\{x_{3},x_{2},x_{1}\})$ and $\Backward(\{x_{4},x_{1}\})$.
\end{example}

Note that if we consider, instead of lexicographical orderings, graded
monomial orderings, then we obtain the following version of
\lemref{A-is-not-solution-3}.  This is useful to lower bound the
number of variables in a minimal solution. Also, it could be useful
when considering different optimality criteria.
\begin{lemma}
  Let $\succ$ be a graded monomial ordering \cite[Ch.
  8.4]{cox2007ideals}. Then, the total degree $d$ of
  $\NFI{\varphi}{\succ}$ is smaller or equal to the number of
  variables involved in any minimal representation of $\varphi$.
\end{lemma}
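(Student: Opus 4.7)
The plan is to combine two facts. First, by the Normal Form Lemma, $\phi := \NFI{\varphi}{\succ}$ is the unique minimum of its residue class in $\BoolFunc n / \vI(B)$ with respect to the polynomial extension of $\succ$ described earlier in the excerpt. Second, for a graded monomial ordering, the initial monomial of any polynomial $p$ is a monomial of maximal total degree, so $\deg(in_\succ(p)) = \deg(p)$. The conclusion should follow by chaining these two observations along an arbitrary representative of $\varphi$.

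More concretely, I would fix any (in particular, any minimal) representative $\psi \in \BoolFunc n$ of $\varphi$ modulo $\vI(B)$ and set $k := |\Var(\psi)|$. Since Boolean functions are represented by square-free polynomials, each monomial of $\psi$ is a product of distinct variables drawn from $\Var(\psi)$, so $\deg(\psi) \leq k$. The normal-form property gives $\phi \preceq \psi$ in the polynomial extension of $\succ$; unwinding the definition from the excerpt, this forces $in_\succ(\phi) \preceq in_\succ(\psi)$ at the monomial level, because the tail-comparison clause only applies once the initial monomials coincide. Applying the graded hypothesis twice yields $d = \deg(\phi) = \deg(in_\succ(\phi)) \leq \deg(in_\succ(\psi)) = \deg(\psi) \leq k$, which is the claim.

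The only step requiring any care is the passage from $\phi \preceq \psi$ to $in_\succ(\phi) \preceq in_\succ(\psi)$, which is essentially immediate from how the polynomial-level ordering was defined. Beyond this small piece of bookkeeping, the argument is a direct one-liner, and I do not foresee a genuine obstacle. A side remark: the proof never uses that $\psi$ is minimal among representatives, so the lemma in fact holds for every representative, and the stated ``minimal'' form is just the most useful packaging as a lower bound on the variable count of optimal classifiers.
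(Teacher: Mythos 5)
Your argument is correct and uses exactly the same ingredients as the paper's proof (square-freeness bounds the degree by the variable count, gradedness converts the normal form's $\succ$-minimality into degree minimality); the paper merely phrases it as a contradiction, while you give the direct chain of inequalities. Your side remark is also consistent with the paper's intent: the bound holds for every representative, and "minimal" appears in the statement only because that is the case of interest.
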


\begin{proof}
  Assume that $\varphi$ has a representation involving less than $d$
  variables. Then, this representation has to have degree less than
  $d$ (because every monomial is square-free). Hence, we get a
  contradiction because $\NFI{\varphi}{\succ}$ is not minimal.
\end{proof}

We can now use the results above to construct
\algref{compute_min_repr}.  In each step of our algorithm we choose a
candidate set $A$ of $P$ and an ordering $\succ$ satisfying
$A^{c}\succ A$. Then we compute the reduction of $\varphi$ with
respect to $\succ$ with the corresponding \gb. Let us call the result
$\phi$. After each reduction in \algref{compute_min_repr} we exclude
from $P$ the sets that we already checked and the sets we can exclude
with the results above.  That is, we can exclude from $P$ the
candidate sets $\Forwardeq(\Var(\phi))$ (Line $9$ in
\algref{compute_min_repr}) and according to
\lemref{A-is-not-solution-3} the family of sets
$\Backward(S_{i})\text{ with }x_{i}\in \Var(in_\prec(\phi))$ where $S_{i}$
is defined according to (\ref{eq:Si}). The algorithm keeps doing this
until the set of candidate sets is empty. To be able to return the
solutions we keep simultaneously track of the set of
potential solutions denoted by $S$. Initially this set equals $P$.  But since we
subtract from $S$ not the set $\Forwardeq(\Var(\phi))$ but
$\Forward(\Var(\phi))$ we keep some of the sets that we checked
already in $S$. This guarantees that $S$ will contain all solutions
when $P$ is empty.

\begin{algorithm}

\begin{algorithmic}[1]
\STATE $P\gets \mathscr{P}(\{x_1, \dots , x_n\})$
\STATE $S\gets \mathscr{P}(\{x_1, \dots , x_n\})$
\WHILE{$P \not = \emptyset$}
\STATE $A \gets \text{any set in }P$
\STATE $\succ \gets \text{any lexicographical ordering satisfying }\Comp(A)\succ A$
\STATE $\varphi \gets \NFI{\varphi}{\succ}$
\STATE $V \gets  \Var(\varphi)$

\STATE $P \gets  P - \Forwardeq(V)$
\STATE $S \gets  S - \Forward(V)$
\FORALL{$x_i$ in $\Var(in_\prec(\varphi))$}
\STATE $S_i \gets  \text{Compute $S_i$ according to Eq.(\ref{eq:Si})}$
\STATE $P \gets  P - \Backward(S_i)$
\STATE $S \gets  S - \Backward(S_i)$
\ENDFOR

\ENDWHILE

\RETURN $S$
\end{algorithmic}

\caption{\label{alg:compute_min_repr}compute\_solutions($\varphi$, $\{x_1, \dots, x_n\}$, $\vI$)}
\end{algorithm}

\section{\label{sec:Implementation-and-benchmarking}Implementation and benchmarking}

When implementing \algref{compute_min_repr} the main difficulties
we face is an effective handling of the candidate sets. In each step
in the loop in \algref{compute_min_repr} we need to pick a new set
$A$ from the family of candidate sets $P$. Selecting a candidate
set from $P$ is not a trivial task since it structure can become very
entangled. The subtraction of the
sets $\Forward(\cdot)$ and $\Backward(\cdot)$ from $P$ can make the
structure of the candidate sets very complicated. In practice this
a very time-consuming part of the algorithm. To tackle this problem
we use a specialized data structure \textendash{} so-called Zero-suppressed
decision diagram (ZDDs) \cite{minato1993zero,mishchenko2001introduction}
\textendash{} to represent $P$. ZDDs are a type of Binary Decision Diagrams (BDDs). A binary decision diagram represents a Boolean function or a family of sets as a rooted
directed acyclic graph. Specific reduction rules are used to obtain a compact, memory efficient representation. ZDDs can therefore effectively store families
of sets. Furthermore, set operations can be computed directly on ZDDs. This makes
them an ideal tool for many combinatorial problems \cite{minato1993zero}.
 We refer to the literature for a more detailed introduction to ZDDs \cite{minato1993zero,mishchenko2001introduction}.

 Our implementation in Python can be found at \url{https://git.io/Jfmuc}.
 For the \gb
calculations as well as the ZDDs we used libraries from PolyBoRi (see
\cite{brickenstein2009polybori}). 
The computation time for the network with 25 components
considered in the case study in \secref{Biological-example} was around
10 seconds on a personal computer with an intel core i5 vPro processor. Other networks we created for test purposes resulted in
similar results (around 30 seconds for example1.py in the repository).
However, computation time depends highly on the structure of the network
and not only on its size. For a network even larger (example2.py in the repository with 38 components) computations took around two seconds while computations for a slightly different network of the same size (see example3.py in the repository) took around a minute. For a similar network (example4.py in the repository) we aborted the computation after one hour.
  In general, the complexity of computing \gbs is highly influenced by algebraic properties such as the regularity of the vanishing ideal of the set we restrict our classifiers to. If the shapes of the sets in our algorithm are more regular (e.g. some components are fixed to zero or one) the number of candidate sets is reduced much faster by the algorithm. Similar, computations seem to be also often faster for networks with fewer regulatory links.

\section{\label{sec:Biological-example}Case study}

Let us consider the Boolean model constructed in \cite{cell-fate-model}
modeling cell-fate decision. These models can be used to identify
how and under which conditions the cell chooses between different
types of cell deaths and survival. The complete model can be found
in the BioModels database with the reference MODEL0912180000. It consists
of $25$ components. The corresponding Boolean function is depicted
in \tabref{Calzone-example}. While in \cite{cell-fate-model} the
authors use a reduced model (see also \cite{reduction_paper}) to
make their analysis more tractable, we can and do work with the complete
model here.

The Boolean network depicted in \tabref{Calzone-example} models the
effect of cytokines such as TNF and FASL on cell death. In the Boolean
model they correspond to input components. These cytokines can trigger
cell death by apoptosis or necrosis (referred to as non-apoptotic
cell death abbreviated by NonACD). Under different cellular conditions
they lead to the activation of pro-survival signaling pathway(s).
Consequently, the model distinguishes three phenotypes: Apoptosis,
NonACD and Survival. Three corresponding signaling pathways are unified
in their model. Finally, specific read-out components for the three phenotypes were
defined. The activation of CASP3 is considered
a marker for apoptosis. When MPT occurs and the level of ATP drops
the cell enters non-apoptotic cell death. If Nf$\kappa$B is activated
cells survive \cite[p. 4]{cell-fate-model}. This leads to the three
classifiers in the model depicted in \tabref{Classifier}. Each classifier
tells us to which cell fate (apoptosis, NonACD, survival) a state
belongs. 

We are interested in alternative classifiers on the set of attractors
of the Boolean network. Let us denote the union of these attractors\footnote{An attractor of a Boolean network is a terminal strongly connected component
of the corresponding state transition graph.} with $B$ (in agreement with the notation in \probref{version1}).
In this case all attractors are steady states (see \cite[p. 4]{cell-fate-model}
for details). For illustrating our results we computed the steady
states of the network using GINsim \cite{GinSim} (see \figref{-steady-states-of-network}).
But this is not necessary for our calculations here. However, we can
see that the classifiers given in \cite{cell-fate-model} indeed
result in disjoint sets of phenotypes.

Since the Boolean network in \tabref{Calzone-example} possesses only
steady states as attractors we can represent the ideal $\vI(B)$ in
\probref{version1} as $\langle f_{1}(x)+x_{1},\dots,f_{n}(x)+x_{n}\rangle$
where $f$ is the Boolean function depicted in \tabref{Calzone-example}.

Next, we computed for each of the classifiers alternative
representations.  In Table~\ref{tab:classifier-celldeath}, we present
the nine different minimal representations on $B$ of the classifier
for NonACD. Among these options there are three ways how
to construct a classifier based on one component (that is ATP, MPT or
ROS). Also
interestingly none of the components in the Boolean network is
strictly necessary for the classification of the
phenotypes. Consequently, there are potentially very different
biological markers in the underlying modeled regulatory
network. Despite this, there are some restrictions on the construction
of the classifier, e.g., if we want to use the component Cytc,
MOMP or SMAC we need to use also the component labeled as apoptosome.
In total, the components useful for the classification of NonACD are
ATP, CASP3, Cytc, MOMP, apoptosome, MPT, ROS and SMAC. The remaining
$17$ components are redundant for this purpose.

We obtain similar results for the other two classifiers. For apoptosis
we found $17$ alternative classifiers depicted in Table~\ref{tab:apoptosis}
involving the nine components (ATP, BAX, CASP8, Cytc, MOMP, SMAC,
MPT, ROS, CASP3 and apoptosome). For the classifier for survival of
the cell depicted in Table~\ref{tab:survival} we found much more
alternative classifiers ($84$ alternative classifiers). Most classifiers
depend on four components. But we can observe that each of the components
IKK, BCL2, NFKB1, RIP1ub, XIAP, cFLIP can be used for classification. Computations for each of the three classifiers took around $10$-$30$
seconds on a personal computer with an intel core i5 vPro processor
in each case.

{\small
\begin{table}
\begin{tabular}{ l | p{0.8\textwidth} }
\hline 
Component   & Update function \tabularnewline
\hline 
\hline 
$ ATP $  & $1+MPT$ \tabularnewline
\hline 
$ BAX $  & $CASP8  \cdot  (1+BCL2)$ \tabularnewline
\hline 
$ BCL2 $  & $NFKB1$ \tabularnewline
\hline
$ CASP3 $  & $(1+XIAP)  \cdot  apoptosome$ \tabularnewline
\hline 
$ CASP8 $  & $((1+DISCTNF) \cdot (1+DISCFAS) \cdot CASP3 \cdot (1+cFLIP) + (1+DISCTNF) \cdot DISCFAS \cdot (1+cFLIP)
+(1+DISCTNF) \cdot (1+DISCFAS) \cdot CASP3 \cdot (1+cFLIP) + (1+DISCTNF) \cdot DISCFAS \cdot (1+cFLIP)) \cdot DISCTNF \cdot (1+cFLIP)
+ ((1+DISCTNF) \cdot (1+DISCFAS) \cdot CASP3 \cdot (1+cFLIP) + (1+DISCTNF) \cdot DISCFAS \cdot (1+cFLIP)
+(1+DISCTNF) \cdot (1+DISCFAS) \cdot CASP3 \cdot (1+cFLIP) + (1+DISCTNF) \cdot DISCFAS \cdot (1+cFLIP))
+ DISCTNF \cdot (1+cFLIP)$ \tabularnewline
\hline 
$ Cytc $  & $MOMP$ \tabularnewline
\hline 
$ DISCFAS $  & $FASL \cdot FADD$ \tabularnewline
\hline 
$ DISCTNF $  & $TNFR \cdot FADD$ \tabularnewline
\hline 
$ FADD $  & $FADD$ \tabularnewline
\hline 
$ FASL $  & $FASL$ \tabularnewline
\hline 
$ IKK $  & $RIP1ub$ \tabularnewline
\hline 
$ MOMP $  & $((1+BAX) \cdot MPT) \cdot BAX + ((1+BAX) \cdot MPT) + BAX$ \tabularnewline
\hline 
$ MPT $  & $(1+BCL2) \cdot ROS$ \tabularnewline
\hline 
$ NFKB1 $  & $IKK \cdot (1+CASP3)$ \tabularnewline
\hline 
$ NonACD $  & $1+ATP$ \tabularnewline
\hline 
$ RIP1 $  & $(1+TNFR) \cdot DISCFAS \cdot (1+CASP8) \cdot TNFR \cdot (1+CASP8) + (1+TNFR) \cdot DISCFAS \cdot (1+CASP8)
+ TNFR \cdot (1+CASP8)$ \tabularnewline
\hline 
$ RIP1K $  & $RIP1$ \tabularnewline
\hline 
$ RIP1ub $  & $RIP1 \cdot cIAP$ \tabularnewline
\hline 
$ ROS $  & $(1+RIP1K) \cdot MPT \cdot NFKB1  \cdot  RIP1K \cdot (1+NFKB1) + RIP1K \cdot (1+NFKB1)
+ (1+RIP1K) \cdot MPT \cdot NFKB1$ \tabularnewline
\hline 
$ SMAC $  & $MOMP$ \tabularnewline
\hline 
$ TNF $  & $TNF$ \tabularnewline
\hline 
$ TNFR $  & $TNF$ \tabularnewline
\hline 
$ XIAP $  & $(1+SMAC) \cdot NFKB1$ \tabularnewline
\hline 
$ apoptosome $  & $ATP \cdot Cytc \cdot (1+XIAP)$ \tabularnewline
\hline 
$ cFLIP $  & $NFKB1$ \tabularnewline
\hline 
$ cIAP $  & $(1+NFKB1) \cdot (1+SMAC) \cdot cIAP  \cdot  NFKB1 \cdot (1+SMAC) + (1+NFKB1) \cdot (1+SMAC) \cdot cIAP
+ NFKB1 \cdot (1+SMAC)$ \tabularnewline
\hline
\end{tabular}

\caption{\label{tab:Calzone-example}Boolean network with $25$ components
given in \cite{cell-fate-model}.}
\end{table}}

\begin{table}
\begin{tabular}{ l | p{0.6\textwidth} }
\hline 
Bio. interpretation of classifier & Classifier\tabularnewline
\hline 
\hline 
Survival & $NFKB1$\tabularnewline
\hline 
Apoptosis & $CASP3$\tabularnewline
\hline 
NonACD & $1+ATP$\tabularnewline
\hline 
\end{tabular}

\caption{\label{tab:Classifier}Classifiers for the Boolean network depicted
in \tabref{Calzone-example}.}
\end{table}

\begin{figure}
\centering
  \includegraphics[width=0.8\textwidth]{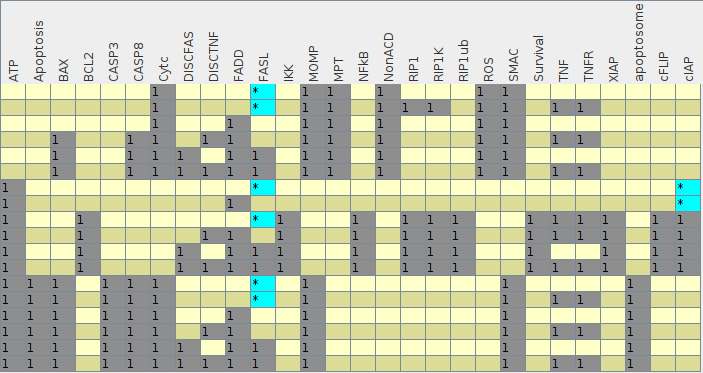}

\caption{\label{fig:-steady-states-of-network}$27$ steady states of the complete
BN computed with GINsim \cite{GinSim}. Components marked with * can either be set to zero or one. Steady states are grouped
into phenotypes. Six steady states are not corresponding to any phenotype.}
\end{figure}

\section{Possible further improvements}

There is still some room for further improvement of the above algorithm.
We address the third point in the beginning of \secref{Description-of-the-algorithm}. We can represent the lexicographical
orderings on $\BoolFunc n$ using weight vectors $w\in\mathbb{N}^{n}$. More precisely,
let $\succ$ be any monomial ordering in $\Fx$
and $w\in\mathbb{N}^{n}$ any weight vector. Then we define $\succ_{w}$
as follows: for two monomials $x^{\alpha}$ and $x^{\beta}$, $\alpha,\beta\in\mathbb{N}^{n}$
we set
\[
x^{\alpha}\succ_{w}x^{\beta} \Leftrightarrow w\cdot\alpha>w\cdot\beta\text{ or }(w\cdot\alpha=w\cdot\beta\text{ and }x^\alpha\succ x^\beta)\cdot
\]
According to \cite[Prop 1.11]{sturmfels1996grobner} for every monomial
ordering $\succ$ and for every ideal in $\Fx$
there exists a non-negative integer vector $w\in\mathbb{N}^{n}$ s.t.
$in_{w}(\vI)=in_{\succ}(\vI)$ where $in_{w}(\vI)$ is the ideal generated by the initial forms $in_{w}(f), f \in \vI$ \textendash that is, the sum of monomials $x^\alpha$ in $f$ which are maximal with respect to the inner product $\alpha \cdot w$. We say also in this case \emph{$w$
represents $\succ$ for $\vI$}.
The following lemma shows how we can construct weight vectors representing lexicographical orderings.
Note that each ideal in $\BoolFunc n$ is a principal
ideal\footnote{This follows from the identity $f\cdot(f+g+f\cdot g)=f$ for $f,g\in\BoolFunc n$.}
and each ideal $\langle f\rangle$ in $\BoolFunc n$ corresponds
to an ideal $\langle f,x_{1}^{2}+x_{1},\dots,x_{n}^{2}+x_{n}\rangle$
in $\Fx$. Let us also for simplicity consider the lexicographical ordering defined by $x_{n}\succ x_{n-1}\succ\cdots\succ x_{1}$. The general case can be obtained by permutation.
\begin{lemma}
\label{lem:weight-vector-construction}Consider an ideal of the form $\vI =\langle f,x_{1}^{2}+x_{1},\dots,x_{n}^{2}+x_{n}\rangle\subseteq\Fx$
and the lexicographical ordering $\succ$ defined by $x_{n}\succ x_{n-1}\succ\cdots\succ x_{1}$. Then $\succ$
is represented by the weight vector $w\in\mathbb{Q}^{n}$ with
$w_{k}=1+\sum_{j=1}^{k-1}w_{j}$
or alternatively $w_{k}=2^{k-1}$. 
\end{lemma}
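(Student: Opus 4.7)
The plan is to first verify that the two formulas for $w_k$ agree by a simple induction: $w_1 = 1 = 2^0$, and assuming $w_j = 2^{j-1}$ for all $j < k$, the recurrence gives $w_k = 1 + \sum_{j=1}^{k-1} 2^{j-1} = 2^{k-1}$. The decisive consequence is a weight-lex correspondence on square-free monomials: for any $\alpha \in \{0,1\}^n$, the inner product $w\cdot\alpha = \sum_{k=1}^n \alpha_k 2^{k-1}$ is precisely the integer with binary expansion $\alpha_n\alpha_{n-1}\cdots \alpha_1$, so distinct square-free exponent vectors yield distinct weights, and ordering by weight coincides on $\{0,1\}^n$ with the lex order $x_n \succ x_{n-1} \succ \cdots \succ x_1$.

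Given this, I would verify the identity $in_w(g) = in_\succ(g)$ on each element of the reduced Gr\"obner basis $G$ of $\vI$ with respect to $\succ$. Since $in_\succ(x_i^2+x_i) = x_i^2$, the monomial ideal $in_\succ(\vI)$ contains every $x_i^2$; consequently, by minimality, the leading monomial of each $g\in G$ is either square-free or of the form $x_j^2$ (for some $j$ such that $x_j$ itself is not a leading monomial in $G$). If $in_\succ(g)$ is square-free, reducedness forbids any tail monomial of $g$ from being divisible by any $x_k^2$, so $g$ is entirely square-free and the weight-lex correspondence immediately yields $in_w(g) = in_\succ(g)$. If instead $in_\succ(g) = x_i^2$, then every tail monomial is $\succ$-strictly smaller than $x_i^2$; lex comparison from the top forces such a monomial to involve only $x_1,\dots,x_i$, reducedness again makes it square-free, and so its $w$-weight is bounded by $\sum_{k=1}^i 2^{k-1} = 2^i - 1 < 2^i = w(x_i^2)$, whence $in_w(g) = x_i^2 = in_\succ(g)$.

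Having checked $in_w(g) = in_\succ(g)$ for every $g\in G$, the argument underlying \cite[Prop.~1.11]{sturmfels1996grobner} yields $in_w(\vI) = \langle in_w(g) : g\in G\rangle = \langle in_\succ(g) : g\in G\rangle = in_\succ(\vI)$, which is exactly the assertion that $w$ represents $\succ$ for $\vI$.

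The main obstacle is the tail-weight bound in the second Gr\"obner case: the inequality $w(\text{tail}) \leq 2^i-1 < w(x_i^2)$ depends crucially on the geometric growth $w_k = 2^{k-1}$, equivalently on the recurrence $w_k = 1+\sum_{j<k}w_j$ (each new weight strictly exceeds the sum of all previous ones). Without this property the non-square-free leading monomial $x_i^2$ could fail to dominate the competing square-free products in $x_1,\dots,x_i$, and the clean matching $in_w(g) = in_\succ(g)$ on the reduced Gr\"obner basis would break down.
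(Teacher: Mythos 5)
Your proof is correct, but it follows a genuinely different route from the paper's. The paper argues by a direct double inclusion: it observes that the field polynomials place every $x_i^2$ in both $in_w(\vI)$ and $in_{\succ}(\vI)$, asserts that $in_w(\vI)$ can then be generated by these squares together with $w$-initial forms of \emph{square-free} elements of $\vI$, and concludes from the single observation that $in_w$ and $in_{\succ}$ agree on square-free polynomials (your binary-expansion fact, which the paper leaves implicit as ``by construction of $w$''); the reverse inclusion is dispatched with ``analogously''. You instead pass to the reduced \gb $G$ of $\vI$ \wrt $\succ$, classify its leading monomials as square-free or of the form $x_i^2$, verify the strict inequalities $in_w(g)=in_{\succ}(g)$ case by case, and then invoke the standard Gr\"obner-cone argument behind \cite[Prop.~1.11]{sturmfels1996grobner}. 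Both proofs hinge on the same core fact that $w\cdot\alpha$ is the binary expansion of a square-free exponent vector, but your version buys two things: it makes explicit exactly where the geometric growth $w_k=1+\sum_{j<k}w_j$ is needed (to let a leading term $x_i^2$ of weight $2^i$ dominate the square-free tails in $x_1,\dots,x_i$ of weight at most $2^i-1$ --- a case the paper's reduction to square-free generators silently absorbs), and it exhibits $w$ as an interior point of the cone $C_G$ that the paper introduces immediately afterwards, so it connects the lemma directly to the weight-vector equivalence classes used there. The price is a longer argument and reliance on the cited machinery, whereas the paper's shorter proof leaves both the existence of the square-free generating set and the reverse inclusion to the reader. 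One small point worth tightening in your write-up: when you argue that reducedness makes the tail of $g$ square-free, the divisor $x_k$ or $x_k^2$ witnessing non-square-freeness could in principle be the leading monomial of $g$ itself rather than of another basis element; that sub-case is excluded not by reducedness but by the fact that no tail monomial can be divisible by the leading monomial of its own polynomial.
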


\begin{proof}
Let $\succ$ be as above and $w$ the corresponding weight vector defined there. We first
show that $in_{w}(\vI) \subseteq in_{\succ}(\vI)$. This is true since by definition
of $\vI$ we know that the monomials $x_{1}^{2},\dots,x_{n}^{2}$ are contained
in $in_{w}(\vI)$ (and obviously in $in_{\succ}(\vI)$). Consequently,
we can represent $in_{w}(\vI)$ in the form $in_{w}(\vI)=\langle in_{w}(g_{1}),\dots,in_{w}(g_{k}),x_{1}^{2},\dots,x_{n}^{2}\rangle$
with square free polynomials $g_{1},\dots,g_{k}$ in $\vI$. Now by
construction of $w$ for square free polynomials $g\in\Fx$
the equality $in_{w}(g)=in_{\succ}(g)$ holds. It follows $in_{w}(\vI)\subseteq in_{\succ}(\vI)$.
Analogously it holds $in_{\succ}(\vI)\subseteq in_{w}(\vI)$.
\end{proof}
Let us call a reduced \gb with a distinguished initial
monomial a \emph{marked \gb} in accordance with
\cite[p. 428]{UsingAlgGeom}. Next, we form equivalence classes of
weight vectors which will lead to the same marked \gbs.

\begin{definition}[{\cite[p. 429]{UsingAlgGeom}}]
\label{def:equivalent-orderings}Let $G$ be any marked \gb
for an ideal $\vI$ consisting of $t$ polynomials 
\[
g_{i}=x^{\alpha(i)}+\sum_{\beta}c_{i,\beta}x^{\beta},
\]
where $i\in\{1,\dots,t\}$ and $x^{\alpha(i)}$ is the initial monomial.
We denote with $C_{G}$ the set
\[
C_{G}=\big\{ w\in(\mathbb{R}^{n})^{+}:(\alpha(i)-\beta)\cdot w\geq0\text{ whenever }c_{i,\beta}\not=0\big\}.
\]
\end{definition}

We can combine \lemref{weight-vector-construction}
and \defref{equivalent-orderings} to potentially improve our algorithm.
If we computed for a lexicographical ordering in \algref{compute_min_repr}
a \gb $G$ we can compute and save the equivalence class
$C_{G}$. Now proceeding with the algorithm, for a new lexicographical
ordering we need to check, we can create the corresponding weight
vector $w$ using \lemref{weight-vector-construction} and check if
$w$ is in any of the previously computed equivalence classes $C_{G}$.
If this is the case we can use the result of the previous computation.

Another aspect of the algorithm we can improve is the conversion of \gbs. For an ideal $\vI(B)=\langle f,x_{1}^{2}+x_{1},\dots,x_{n}^{2}+x_{n}\rangle$
 the ring $\Fx\big/\vI(B)$
is zero-dimensional, and
so a finite dimensional vector space. Therefore, it is possible to
use linear algebra for the conversion of \gbs.
This leads to the Faugère-Gianni-Lazard-Mora algorithm (FLGM algorithm)
\cite[p. 49]{faugere1993efficient,UsingAlgGeom}.

\section{Conclusion}
We reformulated \probref{problem-statement} into the language of
algebraic geometry. To do so we described the set of potential classifiers using residue classes modulo the vanishing ideal of the attractors or steady states of the Boolean network. This enabled us to construct an algorithm using normal forms to compute optimal classifiers. Subsequently we demonstrated the usefulness of
this approach by creating an algorithm that produces the minimal solutions
to \probref{problem-statement}. We showed that it is possible to
apply this algorithm to a model for cell-fate decision with $25$
components from \cite{cell-fate-model}. Especially, in combination with reduction algorithms for Boolean networks this allows us to investigate larger networks. 

We hope that it will be also possible to exploit the algebraic reformulation further to speed up computations to tackle even larger networks. Some parts in the algorithm can be improved to obtain potentially
faster computation times. For example the conversion between different \gbs can be done more efficiently using the FLGM algorithm
(see \cite[p. 49-54]{faugere1993efficient,UsingAlgGeom}) which uses
linear algebra for the conversion of \gbs. Since at the moment
of writing this article there was no implementation of this available
in PolyBoRi we did not use this potential improvement. 

However, the main bottleneck for the speed of the algorithm seems
to be the enumeration of possible orderings (or more precisely candidate sets). Therefore, we believe that this will
not lead to a significant increase in speed but this remains to be
tested. Instead we believe, that for larger networks heuristics should be investigated.
Here ideas from the machine learning community could be useful. Potential crosslinks to classifications problems considered there should be explored in the future.

Also different optimality criteria for picking classifiers might
be useful. For example one could try to attribute measurement costs
to components and pick polynomial orderings which lead to optimal
results in such a context as well.

In the introduction we mentioned the relationship of \probref{problem-statement}
to problems in the LAD community. There one starts typically with
a data set of Boolean vectors. Here we focused on the case where our
sets to be classifies are implicitly given. However, approaches developed
for interpolation of Boolean polynomials from data points such as\cite{GrFree}
could be used in the future to tailor our approach to such scenarios
as well.

\bibliographystyle{splncs04}
\bibliography{bibtex}

\newpage

\section*{Appendix}

\begin{table}[h!]

  \caption{\label{tab:classifier-celldeath}$9$ different representations of
classifier for NonACD (see \secref{Biological-example}).}

\begin{tabular}{| p{.30\textwidth} | p{.70\textwidth} |}  \hline
Components & Expression \\
\hline\hline 
$ATP$ & $ATP + 1$ \\
\hline 
$CASP3$, $ Cytc$ & $Cytc + CASP3$ \\
\hline 
$CASP3$, $ MOMP$ & $MOMP + CASP3$ \\
\hline 
$CASP3$, $ SMAC$ & $SMAC + CASP3$ \\
\hline 
$Cytc$, $ apoptosome$ & $Cytc + apoptosome$ \\
\hline 
$MOMP$, $ apoptosome$ & $apoptosome + MOMP$ \\
\hline 
$MPT$ & $MPT$ \\
\hline 
$ROS$ & $ROS$ \\
\hline 
$SMAC$, $ apoptosome$ & $apoptosome + SMAC$ \\
\hline
\end{tabular}

\end{table}

\begin{table}[h!]

  \caption{\label{tab:apoptosis}$17$ different representations of the classifier
for apoptosis (see \secref{Biological-example})}
\begin{tabular}{| p{.30\textwidth} | p{.70\textwidth} |}  \hline
Components & Expression \\
\hline \hline
$ATP, BAX$ & $BAX \cdot ATP$ \\
\hline 
$ATP, CASP8$ & $ATP \cdot CASP8$ \\
\hline 
$ATP, Cytc$ & $Cytc + ATP + 1$ \\
\hline 
$ATP, MOMP$ & $ATP + MOMP + 1$ \\
\hline 
$ATP, SMAC$ & $ATP + SMAC + 1$ \\
\hline 
$BAX, MPT$ & $BAX \cdot MPT + BAX$ \\
\hline 
$BAX, ROS$ & $BAX \cdot ROS + BAX$ \\
\hline 
$CASP3,$ & $CASP3$ \\
\hline 
$CASP8, MPT$ & $MPT \cdot CASP8 + CASP8$ \\
\hline 
$CASP8, ROS$ & $ROS \cdot CASP8 + CASP8$ \\
\hline 
$Cytc, MPT$ & $Cytc + MPT$ \\
\hline 
$Cytc, ROS$ & $Cytc + ROS$ \\
\hline 
$MOMP, MPT$ & $MPT + MOMP$ \\
\hline 
$MOMP, ROS$ & $ROS + MOMP$ \\
\hline 
$MPT, SMAC$ & $MPT + SMAC$ \\
\hline 
$ROS, SMAC$ & $ROS + SMAC$ \\
\hline 
$apoptosome,$ & $apoptosome$ \\
\hline
\end{tabular}

\end{table}

\newpage

\begin{longtable}{| p{.30\textwidth} | p{.70\textwidth} |}
  \caption{84 different representations of the classifier of survival of the cell. (see Section~\ref{sec:Biological-example})} 
\label{tab:survival} \\
  \hline
Components & Expression \\
\hline  \hline
$ATP$, $ BAX$, $ DISCFAS$, $ TNF$ & $DISCFAS \cdot BAX + DISCFAS \cdot ATP + DISCFAS \cdot TNF + BAX \cdot ATP + BAX \cdot TNF + BAX + ATP \cdot TNF$ \\
\hline 
$ATP$, $ BAX$, $ DISCFAS$, $ TNFR$ & $DISCFAS \cdot BAX + DISCFAS \cdot ATP + DISCFAS \cdot TNFR + BAX \cdot ATP + BAX \cdot TNFR + BAX + ATP \cdot TNFR$ \\
\hline 
$ATP$, $ BAX$, $ FADD$, $ FASL$, $ TNF$ & $BAX \cdot FASL \cdot FADD + BAX \cdot TNF + ATP \cdot FASL \cdot FADD \cdot TNF + ATP \cdot FADD \cdot TNF + ATP \cdot TNF + FASL \cdot FADD + FADD \cdot TNF$ \\
\hline 
$ATP$, $ BAX$, $ FADD$, $ FASL$, $ TNFR$ & $BAX \cdot TNFR + BAX \cdot FASL \cdot FADD + ATP \cdot TNFR \cdot FASL \cdot FADD + ATP \cdot TNFR \cdot FADD + ATP \cdot TNFR + TNFR \cdot FADD + FASL \cdot FADD$ \\
\hline 
$ATP$, $ CASP3$, $ DISCFAS$, $ TNF$ & $DISCFAS \cdot ATP \cdot TNF + DISCFAS \cdot ATP + DISCFAS \cdot CASP3 + ATP \cdot TNF + TNF \cdot CASP3$ \\
\hline 
$ATP$, $ CASP3$, $ DISCFAS$, $ TNFR$ & $DISCFAS \cdot ATP \cdot TNFR + DISCFAS \cdot ATP + DISCFAS \cdot CASP3 + ATP \cdot TNFR + TNFR \cdot CASP3$ \\
\hline 
$ATP$, $ CASP3$, $ FADD$, $ FASL$, $ TNF$ & $ATP \cdot FASL \cdot FADD \cdot TNF + ATP \cdot FASL \cdot FADD + ATP \cdot TNF + FASL \cdot FADD \cdot CASP3 + TNF \cdot CASP3$ \\
\hline 
$ATP$, $ CASP3$, $ FADD$, $ FASL$, $ TNFR$ & $ATP \cdot TNFR \cdot FASL \cdot FADD + ATP \cdot TNFR + ATP \cdot FASL \cdot FADD + TNFR \cdot CASP3 + FASL \cdot FADD \cdot CASP3$ \\
\hline 
$ATP$, $ CASP8$, $ DISCFAS$, $ TNF$ & $DISCFAS \cdot TNF \cdot CASP8 + DISCFAS \cdot TNF + DISCFAS \cdot CASP8 + DISCFAS + ATP \cdot TNF \cdot CASP8 + ATP \cdot TNF$ \\
\hline 
$ATP$, $ CASP8$, $ DISCFAS$, $ TNFR$ & $DISCFAS \cdot TNFR \cdot CASP8 + DISCFAS \cdot TNFR + DISCFAS \cdot CASP8 + DISCFAS + ATP \cdot TNFR \cdot CASP8 + ATP \cdot TNFR$ \\
\hline 
$ATP$, $ CASP8$, $ FADD$, $ FASL$, $ TNF$ & $ATP \cdot TNF \cdot CASP8 + ATP \cdot TNF + FASL \cdot FADD \cdot TNF \cdot CASP8 + FASL \cdot FADD \cdot TNF + FASL \cdot FADD \cdot CASP8 + FASL \cdot FADD$ \\
\hline 
$ATP$, $ CASP8$, $ FADD$, $ FASL$, $ TNFR$ & $ATP \cdot TNFR + ATP \cdot FASL \cdot CASP8 + ATP \cdot CASP8 + TNFR \cdot FASL \cdot FADD + TNFR \cdot CASP8 + FASL \cdot FADD \cdot CASP8 + FASL \cdot FADD + FASL \cdot CASP8 + CASP8$ \\
\hline 
$ATP$, $ DISCFAS$, $ TNF$, $ apoptosome$ & $DISCFAS \cdot ATP \cdot TNF + DISCFAS \cdot ATP + DISCFAS \cdot apoptosome + ATP \cdot TNF + apoptosome \cdot TNF$ \\
\hline 
$ATP$, $ DISCFAS$, $ TNFR$, $ apoptosome$ & $DISCFAS \cdot ATP \cdot TNFR + DISCFAS \cdot ATP + DISCFAS \cdot apoptosome + ATP \cdot TNFR + apoptosome \cdot TNFR$ \\
\hline 
$ATP$, $ FADD$, $ FASL$, $ TNF$, $ apoptosome$ & $ATP \cdot FASL \cdot FADD \cdot TNF + ATP \cdot FASL \cdot FADD + ATP \cdot TNF + apoptosome \cdot FASL \cdot FADD + apoptosome \cdot TNF$ \\
\hline 
$ATP$, $ FADD$, $ FASL$, $ TNFR$, $ apoptosome$ & $ATP \cdot TNFR \cdot FASL \cdot FADD + ATP \cdot TNFR + ATP \cdot FASL \cdot FADD + apoptosome \cdot TNFR + apoptosome \cdot FASL \cdot FADD$ \\
\hline 
$ATP$, $ RIP1$ & $ATP \cdot RIP1$ \\
\hline 
$ATP$, $ RIP1K$ & $ATP \cdot RIP1K$ \\
\hline 
$BAX$, $ DISCFAS$, $ MPT$, $ TNF$ & $DISCFAS \cdot BAX + DISCFAS \cdot MPT + DISCFAS \cdot TNF + DISCFAS + BAX \cdot MPT + BAX \cdot TNF + MPT \cdot TNF + TNF$ \\
\hline 
$BAX$, $ DISCFAS$, $ MPT$, $ TNFR$ & $DISCFAS \cdot BAX + DISCFAS \cdot TNFR + DISCFAS \cdot MPT + DISCFAS + BAX \cdot TNFR + BAX \cdot MPT + TNFR \cdot MPT + TNFR$ \\
\hline 
$BAX$, $ DISCFAS$, $ ROS$, $ TNF$ & $DISCFAS \cdot BAX + DISCFAS \cdot ROS + DISCFAS \cdot TNF + DISCFAS + BAX \cdot ROS + BAX \cdot TNF + ROS \cdot TNF + TNF$ \\
\hline 
$BAX$, $ DISCFAS$, $ ROS$, $ TNFR$ & $DISCFAS \cdot BAX + DISCFAS \cdot TNFR + DISCFAS \cdot ROS + DISCFAS + BAX \cdot TNFR + BAX \cdot ROS + TNFR \cdot ROS + TNFR$ \\
\hline 
$BAX$, $ FADD$, $ FASL$, $ MPT$, $ TNF$ & $BAX \cdot FASL \cdot FADD + BAX \cdot TNF + MPT \cdot FASL \cdot FADD \cdot TNF + MPT \cdot FADD \cdot TNF + MPT \cdot TNF + FASL \cdot FADD \cdot TNF + FASL \cdot FADD + TNF$ \\
\hline 
$BAX$, $ FADD$, $ FASL$, $ MPT$, $ TNFR$ & $BAX \cdot TNFR + BAX \cdot FASL \cdot FADD + TNFR \cdot MPT \cdot FASL \cdot FADD + TNFR \cdot MPT \cdot FADD + TNFR \cdot MPT + TNFR \cdot FASL \cdot FADD + TNFR + FASL \cdot FADD$ \\
\hline 
$BAX$, $ FADD$, $ FASL$, $ ROS$, $ TNF$ & $BAX \cdot FASL \cdot FADD + BAX \cdot TNF + FASL \cdot FADD \cdot ROS \cdot TNF + FASL \cdot FADD \cdot TNF + FASL \cdot FADD + FADD \cdot ROS \cdot TNF + ROS \cdot TNF + TNF$ \\
\hline 
$BAX$, $ FADD$, $ FASL$, $ ROS$, $ TNFR$ & $BAX \cdot TNFR + BAX \cdot FASL \cdot FADD + TNFR \cdot FASL \cdot FADD \cdot ROS + TNFR \cdot FASL \cdot FADD + TNFR \cdot FADD \cdot ROS + TNFR \cdot ROS + TNFR + FASL \cdot FADD$ \\
\hline 
$BCL2$ & $BCL2$ \\
\hline 
$CASP3$, $ DISCFAS$, $ MPT$, $ TNF$ & $DISCFAS \cdot MPT \cdot TNF + DISCFAS \cdot MPT + DISCFAS \cdot TNF + DISCFAS \cdot CASP3 + DISCFAS + MPT \cdot TNF + TNF \cdot CASP3 + TNF$ \\
\hline 
$CASP3$, $ DISCFAS$, $ MPT$, $ TNFR$ & $DISCFAS \cdot TNFR \cdot MPT + DISCFAS \cdot TNFR + DISCFAS \cdot MPT + DISCFAS \cdot CASP3 + DISCFAS + TNFR \cdot MPT + TNFR \cdot CASP3 + TNFR$ \\
\hline 
$CASP3$, $ DISCFAS$, $ ROS$, $ TNF$ & $DISCFAS \cdot ROS \cdot TNF + DISCFAS \cdot ROS + DISCFAS \cdot TNF + DISCFAS \cdot CASP3 + DISCFAS + ROS \cdot TNF + TNF \cdot CASP3 + TNF$ \\
\hline 
$CASP3$, $ DISCFAS$, $ ROS$, $ TNFR$ & $DISCFAS \cdot TNFR \cdot ROS + DISCFAS \cdot TNFR + DISCFAS \cdot ROS + DISCFAS \cdot CASP3 + DISCFAS + TNFR \cdot ROS + TNFR \cdot CASP3 + TNFR$ \\
\hline 
$CASP3$, $ FADD$, $ FASL$, $ MPT$, $ TNF$ & $MPT \cdot FASL \cdot FADD \cdot TNF + MPT \cdot FASL \cdot FADD + MPT \cdot TNF + FASL \cdot FADD \cdot TNF + FASL \cdot FADD \cdot CASP3 + FASL \cdot FADD + TNF \cdot CASP3 + TNF$ \\
\hline 
$CASP3$, $ FADD$, $ FASL$, $ MPT$, $ TNFR$ & $TNFR \cdot MPT \cdot FASL \cdot FADD + TNFR \cdot MPT + TNFR \cdot FASL \cdot FADD + TNFR \cdot CASP3 + TNFR + MPT \cdot FASL \cdot FADD + FASL \cdot FADD \cdot CASP3 + FASL \cdot FADD$ \\
\hline 
$CASP3$, $ FADD$, $ FASL$, $ ROS$, $ TNF$ & $FASL \cdot FADD \cdot ROS \cdot TNF + FASL \cdot FADD \cdot ROS + FASL \cdot FADD \cdot TNF + FASL \cdot FADD \cdot CASP3 + FASL \cdot FADD + ROS \cdot TNF + TNF \cdot CASP3 + TNF$ \\
\hline 
$CASP3$, $ FADD$, $ FASL$, $ ROS$, $ TNFR$ & $TNFR \cdot FASL \cdot FADD \cdot ROS + TNFR \cdot FASL \cdot FADD + TNFR \cdot ROS + TNFR \cdot CASP3 + TNFR + FASL \cdot FADD \cdot ROS + FASL \cdot FADD \cdot CASP3 + FASL \cdot FADD$ \\
\hline 
$CASP8$, $ DISCFAS$, $ MPT$, $ TNF$ & $DISCFAS \cdot TNF \cdot CASP8 + DISCFAS \cdot TNF + DISCFAS \cdot CASP8 + DISCFAS + MPT \cdot TNF \cdot CASP8 + MPT \cdot TNF + TNF \cdot CASP8 + TNF$ \\
\hline 
$CASP8$, $ DISCFAS$, $ MPT$, $ TNFR$ & $DISCFAS \cdot TNFR + DISCFAS \cdot MPT + DISCFAS \cdot CASP8 + DISCFAS + TNFR \cdot MPT + TNFR \cdot CASP8 + TNFR + MPT \cdot CASP8$ \\
\hline 
$CASP8$, $ DISCFAS$, $ ROS$, $ TNF$ & $DISCFAS \cdot TNF \cdot CASP8 + DISCFAS \cdot TNF + DISCFAS \cdot CASP8 + DISCFAS + ROS \cdot TNF \cdot CASP8 + ROS \cdot TNF + TNF \cdot CASP8 + TNF$ \\
\hline 
$CASP8$, $ DISCFAS$, $ ROS$, $ TNFR$ & $DISCFAS \cdot TNFR + DISCFAS \cdot ROS + DISCFAS \cdot CASP8 + DISCFAS + TNFR \cdot ROS + TNFR \cdot CASP8 + TNFR + ROS \cdot CASP8$ \\
\hline 
$CASP8$, $ FADD$, $ FASL$, $ MPT$, $ TNF$ & $MPT \cdot TNF \cdot CASP8 + MPT \cdot TNF + FASL \cdot FADD \cdot TNF \cdot CASP8 + FASL \cdot FADD \cdot TNF + FASL \cdot FADD \cdot CASP8 + FASL \cdot FADD + TNF \cdot CASP8 + TNF$ \\
\hline 
$CASP8$, $ FADD$, $ FASL$, $ MPT$, $ TNFR$ & $TNFR \cdot MPT + TNFR \cdot FASL \cdot FADD + TNFR \cdot CASP8 + TNFR + MPT \cdot FASL \cdot CASP8 + MPT \cdot CASP8 + FASL \cdot FADD \cdot CASP8 + FASL \cdot FADD$ \\
\hline 
$CASP8$, $ FADD$, $ FASL$, $ ROS$, $ TNF$ & $FASL \cdot FADD \cdot TNF + FASL \cdot FADD \cdot CASP8 + FASL \cdot FADD + FASL \cdot ROS \cdot CASP8 + ROS \cdot TNF + ROS \cdot CASP8 + TNF \cdot CASP8 + TNF$ \\
\hline 
$CASP8$, $ FADD$, $ FASL$, $ ROS$, $ TNFR$ & $TNFR \cdot FASL \cdot FADD + TNFR \cdot ROS + TNFR \cdot CASP8 + TNFR + FASL \cdot FADD \cdot CASP8 + FASL \cdot FADD + FASL \cdot ROS \cdot CASP8 + ROS \cdot CASP8$ \\
\hline 
$Cytc$, $ DISCFAS$, $ TNF$ & $Cytc \cdot DISCFAS \cdot TNF + Cytc \cdot DISCFAS + Cytc \cdot TNF + DISCFAS \cdot TNF + DISCFAS + TNF$ \\
\hline 
$Cytc$, $ DISCFAS$, $ TNFR$ & $Cytc \cdot DISCFAS \cdot TNFR + Cytc \cdot DISCFAS + Cytc \cdot TNFR + DISCFAS \cdot TNFR + DISCFAS + TNFR$ \\
\hline 
$Cytc$, $ FADD$, $ FASL$, $ TNF$ & $Cytc \cdot FASL \cdot FADD \cdot TNF + Cytc \cdot FASL \cdot FADD + Cytc \cdot TNF + FASL \cdot FADD \cdot TNF + FASL \cdot FADD + TNF$ \\
\hline 
$Cytc$, $ FADD$, $ FASL$, $ TNFR$ & $Cytc \cdot TNFR \cdot FASL \cdot FADD + Cytc \cdot TNFR + Cytc \cdot FASL \cdot FADD + TNFR \cdot FASL \cdot FADD + TNFR + FASL \cdot FADD$ \\
\hline 
$Cytc$, $ RIP1$ & $Cytc \cdot RIP1 + RIP1$ \\
\hline 
$Cytc$, $ RIP1K$ & $Cytc \cdot RIP1K + RIP1K$ \\
\hline 
$DISCFAS$, $ MOMP$, $ TNF$ & $DISCFAS \cdot MOMP \cdot TNF + DISCFAS \cdot MOMP + DISCFAS \cdot TNF + DISCFAS + MOMP \cdot TNF + TNF$ \\
\hline 
$DISCFAS$, $ MOMP$, $ TNFR$ & $DISCFAS \cdot TNFR \cdot MOMP + DISCFAS \cdot TNFR + DISCFAS \cdot MOMP + DISCFAS + TNFR \cdot MOMP + TNFR$ \\
\hline 
$DISCFAS$, $ MPT$, $ TNF$, $ apoptosome$ & $DISCFAS \cdot apoptosome + DISCFAS \cdot MPT \cdot TNF + DISCFAS \cdot MPT + DISCFAS \cdot TNF + DISCFAS + apoptosome \cdot TNF + MPT \cdot TNF + TNF$ \\
\hline 
$DISCFAS$, $ MPT$, $ TNFR$, $ apoptosome$ & $DISCFAS \cdot apoptosome + DISCFAS \cdot TNFR \cdot MPT + DISCFAS \cdot TNFR + DISCFAS \cdot MPT + DISCFAS + apoptosome \cdot TNFR + TNFR \cdot MPT + TNFR$ \\
\hline 
$DISCFAS$, $ ROS$, $ TNF$, $ apoptosome$ & $DISCFAS \cdot apoptosome + DISCFAS \cdot ROS \cdot TNF + DISCFAS \cdot ROS + DISCFAS \cdot TNF + DISCFAS + apoptosome \cdot TNF + ROS \cdot TNF + TNF$ \\
\hline 
$DISCFAS$, $ ROS$, $ TNFR$, $ apoptosome$ & $DISCFAS \cdot apoptosome + DISCFAS \cdot TNFR \cdot ROS + DISCFAS \cdot TNFR + DISCFAS \cdot ROS + DISCFAS + apoptosome \cdot TNFR + TNFR \cdot ROS + TNFR$ \\
\hline 
$DISCFAS$, $ SMAC$, $ TNF$ & $DISCFAS \cdot SMAC \cdot TNF + DISCFAS \cdot SMAC + DISCFAS \cdot TNF + DISCFAS + SMAC \cdot TNF + TNF$ \\
\hline 
$DISCFAS$, $ SMAC$, $ TNFR$ & $DISCFAS \cdot TNFR \cdot SMAC + DISCFAS \cdot TNFR + DISCFAS \cdot SMAC + DISCFAS + TNFR \cdot SMAC + TNFR$ \\
\hline 
$DISCFAS$, $ TNF$, $ cIAP$ & $DISCFAS \cdot TNF \cdot cIAP + DISCFAS \cdot cIAP + TNF \cdot cIAP$ \\
\hline 
$DISCFAS$, $ TNFR$, $ cIAP$ & $DISCFAS \cdot TNFR \cdot cIAP + DISCFAS \cdot cIAP + TNFR \cdot cIAP$ \\
\hline 
$FADD$, $ FASL$, $ MOMP$, $ TNF$ & $FASL \cdot FADD \cdot MOMP \cdot TNF + FASL \cdot FADD \cdot MOMP + FASL \cdot FADD \cdot TNF + FASL \cdot FADD + MOMP \cdot TNF + TNF$ \\
\hline 
$FADD$, $ FASL$, $ MOMP$, $ TNFR$ & $TNFR \cdot FASL \cdot FADD \cdot MOMP + TNFR \cdot FASL \cdot FADD + TNFR \cdot MOMP + TNFR + FASL \cdot FADD \cdot MOMP + FASL \cdot FADD$ \\
\hline 
$FADD$, $ FASL$, $ MPT$, $ TNF$, $ apoptosome$ & $apoptosome \cdot FASL \cdot FADD + apoptosome \cdot TNF + MPT \cdot FASL \cdot FADD \cdot TNF + MPT \cdot FASL \cdot FADD + MPT \cdot TNF + FASL \cdot FADD \cdot TNF + FASL \cdot FADD + TNF$ \\
\hline 
$FADD$, $ FASL$, $ MPT$, $ TNFR$, $ apoptosome$ & $apoptosome \cdot TNFR + apoptosome \cdot FASL \cdot FADD + TNFR \cdot MPT \cdot FASL \cdot FADD + TNFR \cdot MPT + TNFR \cdot FASL \cdot FADD + TNFR + MPT \cdot FASL \cdot FADD + FASL \cdot FADD$ \\
\hline 
$FADD$, $ FASL$, $ ROS$, $ TNF$, $ apoptosome$ & $apoptosome \cdot FASL \cdot FADD + apoptosome \cdot TNF + FASL \cdot FADD \cdot ROS \cdot TNF + FASL \cdot FADD \cdot ROS + FASL \cdot FADD \cdot TNF + FASL \cdot FADD + ROS \cdot TNF + TNF$ \\
\hline 
$FADD$, $ FASL$, $ ROS$, $ TNFR$, $ apoptosome$ & $apoptosome \cdot TNFR + apoptosome \cdot FASL \cdot FADD + TNFR \cdot FASL \cdot FADD \cdot ROS + TNFR \cdot FASL \cdot FADD + TNFR \cdot ROS + TNFR + FASL \cdot FADD \cdot ROS + FASL \cdot FADD$ \\
\hline 
$FADD$, $ FASL$, $ SMAC$, $ TNF$ & $FASL \cdot FADD \cdot SMAC \cdot TNF + FASL \cdot FADD \cdot SMAC + FASL \cdot FADD \cdot TNF + FASL \cdot FADD + SMAC \cdot TNF + TNF$ \\
\hline 
$FADD$, $ FASL$, $ SMAC$, $ TNFR$ & $TNFR \cdot FASL \cdot FADD \cdot SMAC + TNFR \cdot FASL \cdot FADD + TNFR \cdot SMAC + TNFR + FASL \cdot FADD \cdot SMAC + FASL \cdot FADD$ \\
\hline 
$FADD$, $ FASL$, $ TNF$, $ cIAP$ & $FASL \cdot FADD \cdot TNF \cdot cIAP + FASL \cdot FADD \cdot cIAP + TNF \cdot cIAP$ \\
\hline 
$FADD$, $ FASL$, $ TNFR$, $ cIAP$ & $TNFR \cdot FASL \cdot FADD \cdot cIAP + TNFR \cdot cIAP + FASL \cdot FADD \cdot cIAP$ \\
\hline 
$IKK$ & $IKK$ \\
\hline 
$MOMP$, $ RIP1$ & $MOMP \cdot RIP1 + RIP1$ \\
\hline 
$MOMP$, $ RIP1K$ & $RIP1K \cdot MOMP + RIP1K$ \\
\hline 
$MPT$, $ RIP1$ & $MPT \cdot RIP1 + RIP1$ \\
\hline 
$MPT$, $ RIP1K$ & $RIP1K \cdot MPT + RIP1K$ \\
\hline 
$NFKB1$ & $NFKB1$ \\
\hline 
$RIP1$, $ ROS$ & $ROS \cdot RIP1 + RIP1$ \\
\hline 
$RIP1$, $ SMAC$ & $SMAC \cdot RIP1 + RIP1$ \\
\hline 
$RIP1$, $ cIAP$ & $RIP1 \cdot cIAP$ \\
\hline 
$RIP1K$, $ ROS$ & $RIP1K \cdot ROS + RIP1K$ \\
\hline 
$RIP1K$, $ SMAC$ & $RIP1K \cdot SMAC + RIP1K$ \\
\hline 
$RIP1K$, $ cIAP$ & $RIP1K \cdot cIAP$ \\
\hline 
$RIP1ub$ & $RIP1ub$ \\
\hline 
$XIAP$ & $XIAP$ \\
\hline 
$cFLIP$ & $cFLIP$ \\
\hline
\end{longtable}

\end{document}